\numberwithin{equation}{section}
\theoremstyle{plain}
\newtheorem{thm}{Theorem}[section]
\newtheorem{rem}{Remark}[section]
\newtheorem{prop}{Proposition}[section]
\newtheorem{lem}{Lemma}[section]
\newcommand{\dE}{\mathbb{E}}
\newcommand{\dR}{\mathbb{R}}
\newcommand{\dP}{\mathbb{P}}
\newcommand{\dC}{\mathbb{C}}
\newcommand{\dN}{\mathbb{N}}
\newcommand{\dZ}{\mathbb{Z}}
\newcommand{\cA}{\mathcal{A}}
\newcommand{\cB}{\mathcal{B}}
\newcommand{\cC}{\mathcal{C}}
\newcommand{\cD}{\mathcal{D}}
\newcommand{\cN}{\mathcal{N}}
\newcommand{\cK}{\mathcal{K}}
\newcommand{\cS}{\mathcal{S}}
\newcommand{\rI}{\mathrm{I}}
\newcommand{\cF}{\mathcal{F}}
\newcommand{\cG}{\mathcal{G}}
\newcommand{\cH}{\mathcal{H}}
\newcommand{\veps}{\varepsilon}
\newcommand{\ind}{\mbox{1}\kern-.25em \mbox{I}}
\font\calcal=cmsy10 scaled\magstep1
\def\build#1_#2^#3{\mathrel{\mathop{\kern 0pt#1}\limits_{#2}^{#3}}}
\def\liml{\build{\longrightarrow}_{}^{{\mbox{\calcal L}}}}
\def\limp{\build{\longrightarrow}_{}^{{\mbox{\calcal P}}}}
\def\videbox{\mathbin{\vbox{\hrule\hbox{\vrule height1.4ex \kern.6em\vrule height1.4ex}\hrule}}}
\def\demend{\hfill $\videbox$\\}
\newcommand{\indicatrice}{\mathchoice{\rm 1\mskip-4mu l}{\rm 1\mskip-4mu l}{\rm 1\mskip-4.5mu l} {\rm 1\mskip-5mu l}}
\newcommand{\Hm}[1]{\leavevmode{\marginpar{\tiny%
$\hbox to 0mm{\hspace*{-0.5mm}$\leftarrow$\hss}%
\vcenter{\vrule depth 0.1mm height 0.1mm width \the\marginparwidth}%
\hbox to 0mm{\hss$\rightarrow$\hspace*{-0.5mm}}$\\\relax\raggedright
#1}}}
\newcommand{\red}[1]{\textcolor{red}{#1}}
\begin{document}
\title[On the number of descents in a random permutation]
{Sharp large deviations and concentration inequalities for the number of descents in a random permutation}
\author{Bernard Bercu}
\thanks{The corresponding author is Adrien Richou, email address: adrien.richou@math.u-bordeaux.fr}
\address{Universit\'e de Bordeaux, Institut de Math\'ematiques de Bordeaux,
UMR CNRS 5251, 351 Cours de la Lib\'eration, 33405 Talence cedex, France.}
\email{bernard.bercu@math.u-bordeaux.fr}
\author{Michel Bonnefont}
\email{michel.bonnefont@math.u-bordeaux.fr}
\author{Adrien Richou}
\email{adrien.richou@math.u-bordeaux.fr}
\date{\today}

\begin{abstract}
The goal of this paper is to go further in the analysis of the behavior of the number of descents in a random permutation. Via 
two different approaches relying on a suitable martingale decomposition or on the Irwin-Hall distribution,  we prove that the number of descents 
satisfies a sharp large deviation principle. A very precise concentration inequality involving the rate function in the large deviation principle
is also provided.
\end{abstract}

 \keywords{Large deviations, concentration inequalities, random permutations}
 \subjclass{60F10, 05A05}

\maketitle

\vspace{1ex}


\section{Introduction}
\label{S-I}


Let $\cS_n$ be the symmetric group of permutations on the set of integers $\{1,\ldots,n\}$ where $n \geq 1$. 
A permutation $\pi_{{ n}} \in \cS_n$ is said to have a descent at position $k \in \{1,\ldots,n-1\}$ if  $\pi_{ n}(k)>\pi_{ n}(k+1)$. 
Denote by $D_n=D_n(\pi_{ n})$ the random variable counting the number of descents of a permutation 
$\pi_{ n}$ chosen uniformly at random from $\cS_n$. We clearly have $D_1=0$ and for all $n \geq 2$,
\begin{equation}
\label{DEFDN}
D_{n} = \sum_{k=1}^{n-1} \rI_{\{\pi_{ n}(k)>\pi_{ n}(k+1)\}}.
\end{equation}
A host of results are available on the asymptotic behavior of the sequence $(D_n)$.
More precisely, we can find in B\'ona \cite{Bona2012} that for all $n \geq 2$,
\vspace{1ex}
$$
\dE[D_n]= \frac{n-1}{2} \hspace{1cm}\text{and}\hspace{1cm}
\text{Var}(D_n)=\frac{n+1}{12}.
$$

\noindent
{ 
In addition, it is possible to get a connection with generalized P\'olya's urn with two colors also known as Friedman's urn, see \cite{Friedman1949} and Remark \ref{REM-Urn} below. In particular, for this construction, we have by Corollary 5.2 in \cite{Freedman1965} the almost sure convergence 
\begin{equation}
\label{ASCVGDN}
\lim_{n \rightarrow \infty}\frac{D_n}{n}=\frac{1}{2} \hspace{1cm} \text{a.s.}
\end{equation}
Following the approach of Tanny \cite{Tanny1973}, see Section \ref{sub-Tanny} below, it is also possible to construct a different sequence $(D_n)$ with same marginal distribution using a sequence of independent random variables sharing the same uniform distribution on $[0,1]$. For this construction, we directly obtain the same almost sure convergence \eqref{ASCVGDN}, as noticed by Gnedin and Olshanski \cite{Gnedin2006}, Section 7.3. Nevertheless, the distribution of the process $(D_n)$ does not correspond to the one investigated in Section \ref{S-MG}.
}

\ \vspace{-1ex} \\
Four different approaches have been reported by Chatterjee and Diaconis \cite{Chatterjee2017} to establish the asymptotic normality
\begin{equation}
\label{ANDN}
\sqrt{n}\Bigl(\frac{D_n}{n} - \frac{1}{2}\Bigr)
\liml \cN \Bigl(0, \frac{1}{12}\Bigr).
\end{equation}
We also refer the reader to the recent contribution of Garet \cite{Garet2021} relying on the method of moments as well as to the recent proof by \"{O}zdemir \cite{Ozdemir2021} using a rather complicated martingale approach. Furthermore, denote by $L_n$ the
number of leaves in a random recursive tree of size $n$. 
It is well-known \cite{Zhang2015} that $L_{n+1}=D_n+ 1$. Hence, it has been proven by Bryc, Minda and Sethuraman \cite{Bryc2009} that
the sequence $(D_n/n)$ satisfies a large deviation principle (LDP) with good rate function given by
\begin{equation}
\label{DEFI}
I(x)=\sup_{ t \in \dR} \bigl\{xt - L(t) \bigr\}
\end{equation}
where the asymptotic cumulant generating function is such as
\begin{equation}\label{DEFL}
L(t)=\log \Bigl(\frac{\exp(t)-1}{t}\Bigr).
\end{equation}
The purpose of this paper is to go further in the analysis of the behavior of the number of descents by proving a sharp large deviation principle (SLDP) for the sequence $(D_n)$. We shall also establish a sharp concentration inequality involving the rate function $I$ given by \eqref{DEFI}.

\ \vspace{-1ex} \\
To be more precise, we propose two different approaches that lead us to a SLDP and a concentration inequality for the sequence
$(D_n)$. The first one relies on a martingale approach while the second one use a miraculous link between the distribution of $(D_n)$ and the Irwin–Hall distribution, as pointed out by Tanny \cite{Tanny1973}. On the one hand, the second method is more direct and simpler in order to establish our results. On the other hand, the first approach is much more general and we are strongly convinced that it can be extended to other statistics on random permutations that share the same kind of iterative structure such as the number of alternating runs \cite{Bona2012, Stanley2008} or the length of the longest alternating subsequence in a random permutation \cite{Houdre2010, Widom2006}. Moreover, we have intentionally kept these two strategies of proof in the manuscript in order to highlight that the martingale approach is as efficient and powerful as the direct method in terms of results.


\ \vspace{-1ex} \\
The paper is organized as follows. Section \ref{S-MG} is devoted to our martingale approach which allows us
to find again a direct proof of \eqref{ASCVGDN} and \eqref{ANDN} and to propose new standard results for the sequence
$(D_n)$ such as a law of iterated logarithm, a quadratic strong law and a functional central limit theorem.
The main results of the paper are given in Section \ref{S-MR}. We establish a SLDP for the sequence $(D_n)$
as well as a sharp concentration inequality involving the rate function $I$.
Three keystone lemmas are analyzed in Section \ref{S-LEM}. All technical proofs are postponed to Sections \ref{S-SLDP} to \ref{S-SR}.


\section{Our martingale approach}
\label{S-MG}

 {We start by describing  precisely the  construction of  the sequence $(D_n)$ on a unique probability space. Let us remark that this construction can be naturally linked to a generalized P\'olya urns, see Remark \ref{REM-Urn} below. We consider a sequence $(V_n)$ of independent random variables uniformly distributed on $\{1,...,n\}$. Then, we set $\pi_1=(1)$ and, for each $n\geq 1$, 
we define recursively the permutation $\pi_{n+1}$ as
\begin{equation}\label{DEFPiN}
\pi_{n+1}(k) = \left\{ \begin{array} {ccc}
\pi_n(k) & \textrm { if }& k<V_{n+1},\\ 
n+1 & \textrm { if }& k=V_{n+1},\\ 
\pi_n(k-1) & \textrm { if }& k>V_{n+1}.\end{array}\right.
\end{equation}
By a direct recursive argument, it is clear that for each $n\geq 1$,  $\pi_n$ is uniformly distributed on $\cS_n$.}
  {Moreover, as explained in  \cite{Ozdemir2021}, it follows from \eqref{DEFDN} and \eqref{DEFPiN} that for all $n \geq 1$,}
\begin{equation}
\label{matrice-transition}
   \dP(D_{n+1}  = D_{n}+d|   {\mathcal{F}_n}) =\left \{ \begin{array}{ccc}
    {\displaystyle \frac{n-D_n}{n+1} }  & \text{ if } & d=1, \vspace{1ex}\\
     {\displaystyle \frac{D_n+1 }{n+1} } & \text{ if }  & d=0, 
   \end{array} \nonumber \right.
\end{equation}
   {with $\cF_n=\sigma(D_1, \ldots,D_n)$.}
This    {means} that 
\begin{equation}
\label{DECDN}
D_{n+1} = D_n + \xi_{n+1}
\end{equation}
where the conditional distribution of $\xi_{n+1}$ given $\cF_n$ is the Bernoulli
$\cB(p_n)$ distribution with parameter
\begin{equation*}
p_n=\frac{n-D_n}{n+1}.
\end{equation*} 
Since $\dE[\xi_{n+1} | \cF_n]=p_n$ and $\dE[\xi_{n+1}^2 | \cF_n]=p_n$, we deduce from \eqref{DECDN} that
\begin{eqnarray}
\label{CEDN}
\dE[D_{n+1} | \cF_n] & =& \dE[D_n+ \xi_{n+1} | \cF_n]=
D_n + p_n \hspace{1cm} \text{a.s.} \\
\label{CVDN}
\dE[D_{n+1}^2 | \cF_n] &=& \dE[(D_n+ \xi_{n+1})^2| \cF_n]=D_n^2+ 2p_nD_n +p_n
\hspace{1cm} \text{a.s.} 
\end{eqnarray}
Moreover, let $(M_n)$ be the sequence defined for all $n \geq 1$ by
\begin{equation}
\label{DEFMN}
M_{n} = n \Bigl( D_n - \frac{n-1}{2} \Bigr).
\end{equation}
We obtain from \eqref{CEDN} that
\begin{eqnarray*}
\dE[M_{n+1} | \cF_n] &=& (n+1) \Bigl( D_n +p_n - \frac{n}{2} \Bigr)=
(n+1) \Bigl( \frac{n}{n+1}D_n - \frac{n(n-1)}{2(n+1)} \Bigr), \\
&=&n \Bigl( D_n - \frac{n-1}{2} \Bigr)=M_n \hspace{1cm} \text{a.s.} 
\end{eqnarray*}
which means that $(M_n)$ is a locally square integrable martingale. We deduce from
\eqref{CVDN} that its predictable quadratic variation is given by 
\begin{equation}
\label{PQVMN}
\langle M \rangle_n  = \sum_{k=1}^{n-1} \dE[(M_{k+1}-M_k)^2|\cF_{k}]=
 \sum_{k=1}^{n-1} (k-D_k)(D_k+1) \hspace{1cm} \text{a.s.} 
\end{equation}
The martingale decomposition \eqref{DEFMN} allows us to find again all the asymptotic results
previously established for the sequence $(D_n)$ such as the almost sure convergence \eqref{ASCVGDN} and 
the asymptotic normality \eqref{ANDN}.
Some improvements to these standard results are as follows. To the best of our knowledge, the quadratic strong law and the law of iterated logarithm are new.

\begin{prop}
\label{P-QSLLIL}
We have the quadratic strong law
\begin{equation}
\label{QSL}
\lim_{n\rightarrow \infty} 
\frac{1}{\log n} \sum_{k=1}^n  \Bigl(\frac{D_k}{k} - \frac{1}{2}\Bigr)^2  =\frac{1}{12}
\hspace{1cm} \text{a.s.}
\end{equation}
Moreover, we also have the law of iterated logarithm 
\begin{eqnarray}
\limsup_{n \rightarrow \infty}  \Bigl(\frac{n}{2 \log \log n}\Bigr)^{1/2} \Bigl(\frac{D_n}{n}-\frac{1}{2}\Bigr) 
&=& -\liminf_{n \rightarrow \infty}  \Bigl(\frac{n}{2 \log \log n}\Bigr)^{1/2} \Bigl(\frac{D_n}{n}-\frac{1}{2}\Bigr) 
\notag\\
&=& \frac{1}{\sqrt{12}} \hspace{1cm}\text{a.s.}
\label{LIL}
\end{eqnarray}
In particular, 
\begin{equation}
\limsup_{n \rightarrow \infty} \Bigl(\frac{n}{2 \log \log n} \Bigr)
\Bigl(\frac{D_n}{n} - \frac{1}{2}\Bigr)^2 =\frac{1}{12}
\hspace{1cm} \text{a.s.}
\label{LILSUP}
\end{equation}
\end{prop}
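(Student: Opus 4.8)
The plan is to deduce both assertions from the corresponding limit theorems for the martingale $(M_n)$ of \eqref{DEFMN}, by means of the elementary identity
\[
\frac{D_n}{n}-\frac12=\frac{M_n}{n^{2}}-\frac{1}{2n},
\]
which is immediate from \eqref{DEFMN}. The deterministic correction $1/(2n)$ is square–summable and negligible at the scale $\sqrt{\log\log n/n}$, so each of \eqref{QSL}, \eqref{LIL} and \eqref{LILSUP} will follow from the analogous statement for $M_n/n^{2}$.

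First I would record the elementary facts on the increments of $(M_n)$. From \eqref{DECDN} and \eqref{DEFMN},
\[
M_{k+1}-M_k=(k+1)\xi_{k+1}-(k-D_k),
\]
so that $|M_{k+1}-M_k|\le k$ almost surely, while \eqref{PQVMN} reads $\langle M\rangle_n=\sum_{k=1}^{n-1}(k-D_k)(D_k+1)$. By the almost sure convergence \eqref{ASCVGDN} one has $(k-D_k)(D_k+1)/k^{2}\to 1/4$ a.s., hence, by the Toeplitz lemma,
\[
\langle M\rangle_n\equivps\frac{n^{3}}{12}\qquad\text{a.s.}
\]
In particular $\langle M\rangle_n\to\infty$ and $\langle M\rangle_{n+1}/\langle M\rangle_n\to1$ a.s., and the bound $|M_{k+1}-M_k|\le k=o\bigl(\sqrt{\langle M\rangle_k/\log\log\langle M\rangle_k}\bigr)$ shows that the increments are far too small to affect the limit theorems we shall invoke.

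Next I would apply the law of the iterated logarithm for locally square integrable martingales (Stout): under the three properties just verified,
\[
\limsup_{n\to\infty}\frac{M_n}{\sqrt{2\langle M\rangle_n\log\log\langle M\rangle_n}}
=-\liminf_{n\to\infty}\frac{M_n}{\sqrt{2\langle M\rangle_n\log\log\langle M\rangle_n}}=1\qquad\text{a.s.}
\]
Substituting $\langle M\rangle_n\sim n^{3}/12$ and $\log\log\langle M\rangle_n\sim\log\log n$, dividing by $n^{2}$ and discarding the term $1/(2n)$ yields \eqref{LIL}; then \eqref{LILSUP} follows by squaring, since the $\limsup$ and the $-\liminf$ in \eqref{LIL} coincide. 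For the quadratic strong law I would invoke a quadratic strong law for martingales, in a form such as
\[
\lim_{n\to\infty}\frac{1}{\log\langle M\rangle_n}\sum_{k=1}^{n}\Bigl(1-\frac{\langle M\rangle_{k-1}}{\langle M\rangle_k}\Bigr)\frac{M_k^{2}}{\langle M\rangle_k}=1\qquad\text{a.s.},
\]
whose hypotheses hold here for the same reasons. Plugging in $1-\langle M\rangle_{k-1}/\langle M\rangle_k\sim 3/k$, $\langle M\rangle_k\sim k^{3}/12$ and $\log\langle M\rangle_n\sim 3\log n$ turns this into $\tfrac{1}{\log n}\sum_{k=1}^{n}M_k^{2}/k^{4}\to 1/12$ a.s. Finally, expanding $(D_k/k-1/2)^{2}=M_k^{2}/k^{4}-M_k/k^{3}+1/(4k^{2})$ and using $\sum_{k}1/k^{2}<\infty$ together with the a.s. convergence of $\sum_{k}M_k/k^{3}$ (guaranteed by the bound $M_k=O(k^{3/2}\sqrt{\log\log k})$ coming from the LIL) gives \eqref{QSL}.

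The argument is conceptually straightforward, and the only real work is bookkeeping: checking that the hypotheses of the quoted martingale LIL and quadratic strong law are indeed met — routine given $|M_{k+1}-M_k|\le k$ and $\langle M\rangle_n\equivps n^{3}/12$ — and, above all, controlling the error terms produced when $\langle M\rangle_n$, $\langle M\rangle_{k-1}/\langle M\rangle_k$, $\log\langle M\rangle_n$ and $\log\log\langle M\rangle_n$ are replaced by their almost sure equivalents, and showing that, like the deterministic correction $1/(2n)$, none of them survives in the limit.
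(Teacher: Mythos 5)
Your proof is correct and follows essentially the same route as the paper: compute $\langle M\rangle_n\sim n^{3}/12$ via the Toeplitz lemma and \eqref{ASCVGDN}, check the bounded‐increment and explosion‐coefficient hypotheses, invoke a martingale law of the iterated logarithm and a martingale quadratic strong law, and then transfer back to $D_n/n-1/2=M_n/n^{2}-1/(2n)$ by direct expansion. The only immaterial differences are the references cited and the disposal of the cross term in the expansion of $(D_k/k-1/2)^2$ --- the paper uses a Toeplitz/Ces\`aro argument with $D_k/k-1/2\to0$ a.s., whereas you use the LIL bound $M_k=O(k^{3/2}\sqrt{\log\log k})$ to get absolute convergence of $\sum_k M_k/k^{3}$ --- both of which work.
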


\noindent
Denote by $D([0,\infty[)$ the Skorokhod space of right-continuous functions
with left-hand limits.  {The functional central limit theorem extends the asymptotic normality \eqref{ANDN}, see a similar result in \cite{Gouet1993} using
generalized P\'olya's urns.}

\begin{prop}
\label{P-FCLT}
We have the distributional convergence in $D([0,\infty[)$,
\begin{equation}
\label{FCLT}
\Big( \sqrt{n}\Big(\frac{D_{\lfloor n t \rfloor}}{\lfloor n t \rfloor}-\frac{1}{2}\Big), t \geq 0\Big) \Longrightarrow \big( W_t, t \geq 0 \big)
\end{equation}
where $(W_t)$ is a real-valued centered Gaussian process starting at the origin with covariance given, for all $0<s \leq t$, by
$$
\dE[W_s W_t]= \frac{s}{12 t^2}.
$$
In particular, we find again the asymptotic normality \eqref{ANDN}.
\end{prop}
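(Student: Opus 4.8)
The plan is to derive Proposition~\ref{P-FCLT} from a functional central limit theorem for the martingale $(M_n)$ of \eqref{DEFMN}, followed by a deterministic time-change.

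First I would rewrite the statistic in terms of $M$. Since $M_m=m(D_m-(m-1)/2)$ we have $D_m/m-1/2=M_m/m^2-1/(2m)$ for every $m\geq 1$, so with $m=\lfloor nt\rfloor$,
\[
\sqrt{n}\Bigl(\frac{D_{\lfloor nt\rfloor}}{\lfloor nt\rfloor}-\frac12\Bigr)
=\frac{n^{2}}{\lfloor nt\rfloor^{2}}\,\frac{M_{\lfloor nt\rfloor}}{n^{3/2}}-\frac{\sqrt n}{2\lfloor nt\rfloor}.
\]
On every compact subinterval of $(0,\infty)$ the last term is $O(n^{-1/2})$ and $n^{2}/\lfloor nt\rfloor^{2}\to t^{-2}$ uniformly, so everything reduces to the behaviour of the rescaled martingale $X^{(n)}_t:=n^{-3/2}M_{\lfloor nt\rfloor}$.

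Next I would apply a martingale functional central limit theorem to $X^{(n)}$. For the predictable bracket, \eqref{PQVMN} gives $\langle M\rangle_{m}=\sum_{k=1}^{m-1}(k-D_k)(D_k+1)$; writing $(k-D_k)(D_k+1)/k^2=(D_k/k)(1-D_k/k)+O(1/k)$ and invoking the almost sure convergence $D_k/k\to 1/2$ from \eqref{ASCVGDN}, a Cesàro summation yields $\langle M\rangle_{\lfloor nt\rfloor}/n^{3}\to t^{3}/12$ a.s., hence $\langle X^{(n)}\rangle_t\to t^{3}/12$ in probability. For the jumps, \eqref{DECDN} gives $M_{k+1}-M_k=(k+1)\xi_{k+1}-(k-D_k)$, so that $|M_{k+1}-M_k|\leq k+1$; for $k\leq nt$ this is $O(n)=o(n^{3/2})$, and the Lindeberg condition holds automatically. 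It follows that $X^{(n)}\Longrightarrow G$ in the Skorokhod space, where $G$ is the continuous centered Gaussian martingale with $\langle G\rangle_t=t^{3}/12$; equivalently $G_t=\beta_{t^{3}/12}$ for a standard Brownian motion $\beta$, with $\dE[G_sG_t]=(s\wedge t)^{3}/12$.

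Finally I would conclude via the continuous mapping theorem for the map $g\mapsto(t\mapsto g(t)/t^{2})$, which is continuous on $D([\eta,\infty[)$ for any $\eta>0$, together with the uniform estimates above; this gives the stated convergence with $W_t=G_t/t^{2}$, and then $\dE[W_sW_t]=\dE[G_sG_t]/(s^{2}t^{2})=s/(12t^{2})$ for $0<s\leq t$, while $t=1$ recovers \eqref{ANDN}. The hard part is the uniform‑in‑$t$ control needed to turn finite-dimensional convergence into the Skorokhod statement; this is precisely what the martingale FCLT delivers, tightness following from Aldous' criterion once the bracket convergence and the jump bound are in hand. One should also note that $\mathrm{Var}(W_t)=1/(12t)$ blows up as $t\downarrow 0$, so the functional convergence is to be understood on compact subsets of $(0,\infty)$.
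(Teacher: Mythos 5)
Your proposal is correct and follows essentially the same route as the paper: express the statistic through the martingale $M_n$ of \eqref{DEFMN}, establish the bracket convergence $\langle M\rangle_{\lfloor nt\rfloor}/n^3\to t^3/12$ and a Lindeberg condition from the deterministic jump bound $|\Delta M_k|\leq k$, invoke the martingale functional CLT to get $M_{\lfloor nt\rfloor}/n^{3/2}\Longrightarrow B_t$ with $\dE[B_sB_t]=s^3/12$, and finish by the deterministic time-change $W_t=B_t/t^2$. Your remark that the limit variance $1/(12t)$ blows up as $t\downarrow 0$ and that the Skorokhod convergence is really uniform on compacts of $(0,\infty)$ is a fair point of precision the paper leaves implicit.
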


\begin{proof}
The proofs are postponed to Section \ref{S-SR}
\end{proof}

{ 
\begin{rem}
\label{REM-Urn}
Relation \eqref{matrice-transition} allows to see the sequence $(D_n)$ as the sequence of the number of white balls in a two colors generalized P\'olya urn \cite{Friedman1949} with the following rule: at each step, one ball is drawn at random and then replaced with an additional ball of the opposite color.
\end{rem}
}


\section{Main results}
\label{S-MR}

\subsection{Sharp large deviations and concentration}
\noindent
Our first result concerns the SLDP for the sequence $(D_n)$ which nicely extends the LDP previously established by Bryc, Minda and Sethuraman \cite{Bryc2009}. For any positive real number $x$, denote $\{x\}=\lceil x \rceil - x$.
\begin{thm}
\label{T-SLDP}
For any $x$ in $]1/2,1[$, we have on the right side
\begin{equation}
\label{SLDPR}
\dP \Big(\frac{D_n}{n} \geq x \Big) = \frac{\exp(-nI(x)-\{nx\}t_x)}{\sigma_x t_x \sqrt{2\pi n}} 
\big[ 1+o(1) \big]
\end{equation} 
where the value $t_x$ is the unique solution of $L^\prime(t_x)=x$ and $\sigma_x^2=L^{\prime \prime}(t_x)$.
\end{thm}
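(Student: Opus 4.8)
The natural route to a sharp large deviation expansion like \eqref{SLDPR} is the Bahadur–Rao / Cram\'er method with a precise second-order analysis, so the plan is to carry out a careful change of measure (exponential tilting) followed by a local central limit refinement. First I would record the exact generating function: since $D_n$ has the same distribution as the sum of $n-1$ independent $\{0,1\}$ random variables with distinct parameters (equivalently, by Tanny's observation, $D_n$ is a discretization of an Irwin–Hall variable), one has an explicit formula for $\mathbb{E}[\exp(tD_n)]$. Indeed, a classical identity gives
\begin{equation*}
\mathbb{E}\bigl[\exp(t D_n)\bigr] = \frac{1}{n!}\sum_{k}A(n,k)\exp(tk),
\end{equation*}
where $A(n,k)$ are the Eulerian numbers, and this quantity admits a clean closed form after one realizes $D_n+$(a uniform) is an Irwin–Hall sum, yielding $\mathbb{E}[\exp(tD_n/n)]^{1/n}\to \exp(L(t))$ with $L$ as in \eqref{DEFL}, together with an explicit finite-$n$ correction term. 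The point is to get not just the exponential rate but the exact prefactor, including the $1/t$ coming from $L(t)=\log((e^t-1)/t)$.

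Next, with $t_x$ the unique solution of $L'(t_x)=x$ (well defined and positive on $]1/2,1[$ since $L'$ is an increasing bijection from $\mathbb{R}$ onto $]0,1[$ with $L'(0)=1/2$) and $\sigma_x^2 = L''(t_x)>0$, I would perform the exponential change of probability $d\widetilde{\mathbb{P}}_n/d\mathbb{P}_n \propto \exp(t_x D_n)$. Under $\widetilde{\mathbb{P}}_n$ the variable $D_n$ has mean close to $nx$ and variance close to $n\sigma_x^2$, and
\begin{equation*}
\mathbb{P}\Bigl(\frac{D_n}{n}\geq x\Bigr) = \exp\bigl(\Lambda_n(t_x)\bigr)\,\widetilde{\mathbb{E}}_n\bigl[\exp(-t_x D_n)\,\mathrm{I}_{\{D_n\geq nx\}}\bigr],
\end{equation*}
where $\Lambda_n(t)=\log\mathbb{E}[\exp(tD_n)]$. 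Expanding $\Lambda_n(t_x) = -nI(x) + (\text{explicit }O(1)\text{ term}) + o(1)$ via the explicit formula produces the $\exp(-nI(x))$ factor and also, after Taylor-expanding around the non-integer target $nx$, the discretization correction $\exp(-\{nx\}t_x)$ — this is exactly where the fractional part enters, since $D_n$ is integer-valued and the threshold $nx$ generally is not. The remaining expectation is then evaluated by a local (lattice) central limit theorem for $D_n$ under $\widetilde{\mathbb{P}}_n$: writing $D_n = nx + \text{(integer shift)}$ and summing the geometric-type weights $\exp(-t_x j)$ against the local probabilities $\approx (\sigma_x\sqrt{2\pi n})^{-1}$ gives $\sum_{j\geq 0}\exp(-t_x j)\cdot(\sigma_x\sqrt{2\pi n})^{-1}[1+o(1)] = (\sigma_x t_x\sqrt{2\pi n})^{-1}[1+o(1)]$, using $\sum_{j\geq0}e^{-t_x j} = (1-e^{-t_x})^{-1}$ and reconciling the constant with $t_x$ through $L$ — here the identity $1-e^{-t}=t e^{-t}/(1-\cdots)$, more precisely the fact that $L'$ and the explicit prefactor combine so the $1/t_x$ survives, does the bookkeeping.

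The main obstacle is the local CLT step made \emph{uniform} enough to deliver a genuine $[1+o(1)]$ rather than just matching exponential rates: one must control the characteristic function of $D_n$ under the tilted measure away from the origin (the non-degeneracy / non-lattice-obstruction estimate), and one must handle the fact that the relevant sum is over an infinite range $j\geq 0$ while the local CLT is only accurate in a window of size $O(\sqrt{n})$ — the tail $j\gg\sqrt{n}$ is killed by the factor $\exp(-t_x j)$, but this truncation argument needs a uniform sub-Gaussian or exponential moment bound under $\widetilde{\mathbb{P}}_n$, which again comes from the explicit product/Eulerian structure. A secondary technical point is tracking the $O(1)$ constant in $\Lambda_n(t_x)+nI(x)$ and checking it is precisely $0$ (no extra constant), which the explicit Irwin–Hall formula should give cleanly; combined with the $\{nx\}t_x$ term this yields the stated form. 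Since the paper advertises two proofs, I would expect the martingale route to instead obtain a comparable (possibly finite-$n$) expansion through the Esscher-transformed martingale and an associated CLT, but the Irwin–Hall computation above is the shortest path to \eqref{SLDPR}.
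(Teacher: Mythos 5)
Your plan is conceptually parallel to the paper's ``direct'' alternative proof (Section~\ref{S-PRU}), which also starts from Tanny's identity and an exponential tilt, but you make a different technical choice that matters. The paper tilts the \emph{continuous} Irwin--Hall sum $S_n$ via $d\dP_n/d\dP = \exp(t_x S_n - nL(t_x))$, so the tilted variable $V_n=(\sqrt n/\sigma_x)(S_n/n-x)$ has a density; the expectation is then evaluated by Parseval's identity and dominated convergence, with the characteristic function factoring \emph{exactly} as
\[
\Phi_n(v)=\exp\!\Big(n\big(L(t_x+\tfrac{iv}{\sigma_x\sqrt n})-L(t_x)-\tfrac{ixv}{\sigma_x\sqrt n}\big)\Big),
\]
so no lattice local CLT is needed. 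You instead tilt the \emph{discrete} $D_n$, which forces you to prove a lattice local limit theorem for the tilted law of $D_n$ (a non-i.i.d.\ integer-valued variable) uniformly enough to turn the sum $\sum_{j\ge 0}e^{-t_x j}\,\widetilde\dP_n(D_n=\lceil nx\rceil+j)$ into $(1-e^{-t_x})^{-1}(\sigma_x\sqrt{2\pi n})^{-1}[1+o(1)]$. That route is workable --- the required characteristic-function control is precisely what Lemma~\ref{LDEVMncompl} delivers for $m_n(t_x+iv)/m_n(t_x)$, including the exponential decay away from the origin --- but it is the longer road, and the uniform local CLT plus the truncation of the $j\gg\sqrt n$ tail would have to be carried out in full. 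The paper's \emph{main} proof sidesteps tilting entirely: it Fourier-inverts $m_n(t_x+iv)$, sums the indicator into the geometric factor $(1-e^{-(t_x+iv)})^{-1}$, and applies an extended Laplace method (Lemma~\ref{LEMLaplace++}) driven by the sharp expansion of $m_n$ from the generating-function PDE; this is what makes the prefactor and the $\{nx\}t_x$ correction fall out cleanly.

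One bookkeeping slip worth flagging: you assert that the $O(1)$ term in $\Lambda_n(t_x)-nL(t_x)$ ``is precisely $0$.'' It is not; Lemma~\ref{LDEVMn} gives $m_n(t_x)=\frac{1-e^{-t_x}}{t_x}\big(\frac{e^{t_x}-1}{t_x}\big)^n(1+o(1))$, so the constant is $\log\frac{1-e^{-t_x}}{t_x}$. This causes no harm, because --- as you hint --- the factor $1-e^{-t_x}$ cancels exactly against the geometric sum $\sum_{j\ge0}e^{-t_x j}=(1-e^{-t_x})^{-1}$, and the surviving $1/t_x$ is what produces the prefactor $(\sigma_x t_x\sqrt{2\pi n})^{-1}$. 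But the cancellation is not automatic: it is the reason you need the exact finite-$n$ prefactor of $m_n$, not merely the log-asymptotics $L$, so be sure to carry that constant explicitly rather than declaring it zero.
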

\noindent
Our second result is devoted to an optimal concentration inequality involving the rate function $I$.

\begin{thm}
\label{T-concentration}
For any $x \in ]1/2,1[$ and for all $n \geq 1$, we have the concentration inequality
\begin{equation}
\label{CID}
    \dP\Big(\frac{D_n}{n} \geq x \Big) \leq  P(x)\frac{\exp(-n I(x)-\{nx\}t_x)}{\sigma_x t_x \sqrt{2\pi n}}
\end{equation}
where the prefactor can be taken as
\begin{align*}
    P(x) =  \sqrt{\frac{t_x^2 + \pi^2}{t_x^2}}+ \left(1+\frac1\pi+\frac{ 2\sqrt{t_x^2+\pi^2} }{\pi^2 -4} \right) \sqrt{\frac{\pi^2(t_x^2+4)}{4}}  .
\end{align*}
\end{thm}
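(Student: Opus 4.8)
The plan is to establish the concentration inequality by a non-asymptotic refinement of the argument behind the sharp large deviation principle in Theorem \ref{T-SLDP}, exploiting the link with the Irwin--Hall distribution recalled in the introduction. Since $L_{n+1}=D_n+1$ and, following Tanny, the law of $D_n$ is that of $\lfloor S_n \rfloor$ where $S_n=U_1+\cdots+U_n$ is a sum of $n$ independent uniform random variables on $[0,1]$, the quantity $\dP(D_n/n \geq x)$ equals $\dP(S_n \geq nx)$ whenever $nx$ is an integer, and more generally $\dP(S_n \geq \lceil nx \rceil)=\dP(S_n \geq nx+\{nx\})$. The first step is therefore to reduce the statement to a clean tail bound for the Irwin--Hall variable $S_n$, keeping careful track of the fractional part $\{nx\}$, which is exactly what produces the factor $\exp(-\{nx\}t_x)$.

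Next I would perform an exact Fourier (or exponential-tilting plus Fourier) representation of $\dP(S_n \geq y)$. Tilting $S_n$ by the parameter $t_x$ determined by $L'(t_x)=x$ turns the problem into controlling the tail of a tilted sum whose mean is placed exactly at $nx$; here the normalization constant of the tilt is $\exp(nL(t_x))$, and the Legendre duality $I(x)=xt_x-L(t_x)$ produces the leading term $\exp(-nI(x))$. The residual probability is then written through the Fourier inversion formula applied to the characteristic function of the tilted uniform law, whose modulus on the real line is essentially $\big|(\mathrm{e}^{t_x+\mathrm{i}\theta}-1)/(t_x+\mathrm{i}\theta)\big|/\big((\mathrm{e}^{t_x}-1)/t_x\big)$. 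The two terms in the prefactor $P(x)$ should emerge from splitting the inversion integral into the central range $|\theta|$ small, which yields the Gaussian factor $1/(\sigma_x t_x\sqrt{2\pi n})$ together with the correction $\sqrt{(t_x^2+\pi^2)/t_x^2}$, and the complementary range $|\theta|$ up to $\pi$, where one bounds the characteristic function by a quantity of the form $\sqrt{(t_x^2+\theta^2)/(t_x^2+\pi^2)}\cdot(\text{something})$ and integrates, producing the second summand with its $1/\pi$, $1/(\pi^2-4)$ and $\sqrt{\pi^2(t_x^2+4)/4}$ pieces. The constants $\pi^2-4$ and $t_x^2+4$ strongly suggest that the estimate on $|\widehat{U}(\theta)|=|(\mathrm{e}^{\mathrm{i}\theta}-1)/(\mathrm{i}\theta)|$ over $[-\pi,\pi]$ is being used via the elementary bound $|\widehat{U}(\theta)|\leq (1+\theta^2/4)^{-1/2}$ or a close variant, which is the source of the $4$'s.

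The main technical obstacle will be obtaining these bounds \emph{uniformly in $n$} rather than merely asymptotically: unlike in Theorem \ref{T-SLDP}, one cannot discard lower-order terms, so every inequality—the Gaussian comparison on the central block, the geometric-type decay of $|\widehat{U}|^n$ away from the origin, and the handling of the $\{nx\}t_x$ shift—must be made explicit with honest constants, and one must check that the error terms genuinely sum into the stated closed-form $P(x)$ without any hidden $n$-dependence. A secondary subtlety is that $S_n$ has a bounded density only for $n \geq 2$ (for $n=1$ the density is the indicator of $[0,1]$), so the case $n=1$, and possibly a few small $n$, may need a direct verification; since for $n=1$ we have $D_1=0$ and $\dP(D_1/1 \geq x)=0$ for $x>0$, that base case is in fact trivial and the inequality holds vacuously. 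Once the uniform Fourier estimates are assembled, collecting the constants and invoking $\sigma_x^2=L''(t_x)$ to identify the Gaussian normalization finishes the proof.
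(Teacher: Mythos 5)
Your plan pursues the direct Irwin--Hall route: use Tanny's identity to pass to $S_n$, tilt by $t_x$, and Fourier-invert the tilted density. The paper does carry out this computation (Section \ref{S-PRU} and Remark \ref{rem-concentration-direct}), and it does yield a non-asymptotic concentration inequality of the stated shape, but with a prefactor $Q_n(x)$ rather than $P(x)$. That $Q_n(x)$ still depends on $n$ through a term of order $\sqrt{n}\,2^{-n/2}/(n-2)$, and the tail estimate that produces it, the bound \eqref{borne-integrale-Kn} for $\int_A^\infty(t_x^2+v^2)^{-n/2}\,dv$, is only usable for $n>2$. So if you follow your plan you will not, as you hope, ``collect the constants'' into the closed-form $P(x)$ of the theorem, and you will need a separate treatment of $n\le 2$ beyond the vacuous case $n=1$ you mention.

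The paper's actual proof of Theorem \ref{T-concentration} takes a different route and stays with the discrete variable. It writes $\dP(D_n/n\ge x)$ as a Fourier integral over $[-\pi,\pi]$ of the complex Laplace transform $m_n(t_x+iv)=\dE[e^{(t_x+iv)D_n}]$ (identity \eqref{PROBAGEQX}), and then inserts the non-asymptotic bound \eqref{UBComplexRemainder2} of Lemma \ref{LDEVMncompl} for $|m_n(t_x+iv)|$, valid for all $|v|\le\pi$ and all $n\ge1$. That bound is the ingredient missing from your outline, and it is \emph{not} a property of the tilted uniform characteristic function alone. One has the splitting $m_n(t+iv)=(1-e^{-(t+iv)})\bigl(m_n^{\cG}(t+iv)+m_n^{\cH}(t+iv)\bigr)$, where $m_n^{\cG}(t+iv)=\frac{1}{t+iv}\bigl(\frac{e^{t+iv}-1}{t+iv}\bigr)^{n}$ is the Irwin--Hall main term you correctly identify, but $m_n^{\cH}$ is a genuine remainder that must be controlled explicitly. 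The paper does so by computing the generating function $F(t,z)=\sum_n m_n(t)z^n$ (Lemma \ref{L-GeneFunction}), locating its poles, and applying Cauchy's inequality on a suitable circle. The numbers $\pi^2-4$ and $t_x^2+4$ you spotted come from exactly that circle-and-pole geometry (via $\beta(\pi)=4/\pi^2$ in the proof of Lemma \ref{LDEVMncompl}), not from an elementary bound on $|(e^{i\theta}-1)/(i\theta)|$. In the paper's proof, the first summand $\sqrt{(t_x^2+\pi^2)/t_x^2}$ of $P(x)$ comes from a Gaussian estimate of the $m_n^{\cG}$ contribution (the quantity $A(x)$), while the second summand, with its $1/\pi$, $1/(\pi^2-4)$ and $\sqrt{\pi^2(t_x^2+4)/4}$, comes from the $m_n^{\cH}$ contribution (the quantity $B(x)$). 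Without such an explicit, $n$-uniform control of the remainder in $m_n(t_x+iv)$, the prefactor $P(x)$ as stated is not reachable by your argument.
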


\begin{rem}
Let us denote $A_n = A_n(\pi_{ n})$ the random variable counting the number of ascents of a permutation $\pi_{ n} \in S_n$. Then, it is clear that 
$D_n(\pi_{ n})+A_n(\pi_{ n})=n-1$. Moreover, by a symmetry argument, $D_n$ and $A_n$ share the same distribution. In particular, $D_n$ has the same distribution as $(n-1) - D_n$. Consequently, for all $x \in ]1/2,1[$, we have
\[ \dP \Big(\frac{D_n+1}{n} \leq 1-x \Big) = \dP \Big(\frac{D_n}{n} \geq x \Big)  \]
which allows to extend immediately the previous results to the left side.
\end{rem}

 {
\begin{rem}
One can observe from \eqref{SLDPR} or \eqref{CID} that for all $\varepsilon >0$,
$$
\sum_{n=1}^\infty \dP \Big( \Big| \frac{D_n}{n} -\frac{1}{2} \Big|>\veps \Big) <+\infty. 
$$
That 
is the complete convergence of $(D_n/n)$ to $1/2$, which directly implies the almost sure convergence \eqref{ASCVGDN} for any construction of the sequence $(D_n)$.
\end{rem}
}

\subsection{A more direct approach}
\label{sub-Tanny}
An alternative approach to prove SLDP and concentration inequalities for the sequence $(D_n)$ relies on a famous result of Tanny \cite{Tanny1973}
which says that the distribution of $D_n$ is nothing else than the one of the integer part of the sum $S_n$ of independent and identically distributed random variables. More precisely, let $(U_n)$ be a sequence of independent random variables
sharing the same uniform distribution on $[0,1]$. Denote
$$
S_n= \sum_{k=1}^n U_k.
$$
Then, we have from \cite{Tanny1973} 
that for all $k \in [\![ 0, n-1 ]\!]$,
\begin{equation}
    \label{Tanny}
    \dP(D_n=k)=\dP(\lfloor S_n \rfloor =k)=\dP(k \leq S_n <k+1).
\end{equation}

\noindent
It simply means that the distribution of $D_n$ is the one of the integer part of the the Irwin-Hall distribution.
Identity \eqref{Tanny} is somewhat miraculous and it is really powerful in order to carry out a sharp analysis of the sequence $(D_n)$. Once again, we would like to emphasize that this direct approach is only relevant for the study of $(D_n)$ while our martingale approach 
is much more general. A direct proof of Theorem \ref{T-SLDP} is provided in Section \ref{S-PRU} relying on identity
\eqref{Tanny}. It is also possible to use this direct approach in order to establish a sharp concentration inequality with the same shape as \eqref{T-concentration}, see Remark \ref{rem-concentration-direct} below.


\subsection{Further considerations on concentration inequalities}


We wish to compare our concentration inequality \eqref{CID} with some classical ones. The first one is given by 
the well-known Azuma-Hoeffding inequality \cite{Bercu2015}.
It follows from \eqref{PQVMN} that the predictable quadratic variation $\langle M \rangle_n$ of the martingale $(M_n)$ satisfies
$$ 
\langle M \rangle_n \leq \frac{s_n}{4}
\hspace{1cm} \text{where} \hspace{1cm}
s_n=\sum_{k=2}^n k^2.
\vspace{-1ex}
$$
In addition, its total quadratic variation reduces to
$$
[M]_n=\sum_{k=1}^{n-1}(M_{k+1}-M_{k})^2=s_n.
$$
Consequently, we deduce from an improvement of Azuma-Hoeffding inequality given by inequality (3.20) in \cite{Bercu2015} that
for any $x \in ]1/2,1[$ and for all $n \geq 1$, 
\begin{equation}
\label{Azuma-Hoeeding}
    \dP\Big(\frac{D_n}{n} \geq x \Big) \leq \exp\Big(\!\! - \frac{2n^4}{s_n}\Big( x - \frac{1}{2} \Big)^2 \Big).
\end{equation}
One can observe that \eqref{CID} is much sharper than \eqref{Azuma-Hoeeding} for all values of $x \in ]1/2,1[$.
Furthermore, by using \eqref{Tanny}, we can also infer a concentration inequality by means of Chernoff's inequality. Indeed, for any $x \in ]1/2,1[$ and for all $n \geq 1$, we have
\begin{eqnarray}
    \nonumber
    \dP\Big(\frac{D_n}{n} \geq x \Big) &=& \dP\Big(\sum_{k=1}^n U_k \geq \lceil  n x \rceil\Big) \leq \exp \big( nL(t_x)-t_x \lceil nx \rceil \big) \\
    &\leq& \exp\big(-n I(x)-\{nx\}t_x\big)
    \label{Chernoff}
\end{eqnarray}
which is also rougher than \eqref{CID}.

\section{Three keystone lemmas}
\label{S-LEM}


Denote by $m_n$ the Laplace transform of $D_n$ defined, for all $t \in \dR$, by 
\begin{equation}
\label{Def-Laplace}
m_n(t)=\dE[\exp(t D_n)].
\end{equation}
One can observe that $m_n(t)$ is finite for all $t \in \dR$ and all $n \geq 1$ since $D_n$ is finite.
Let us introduce the generating function defined, for all $t \in \dR$
and for all $z \in \dC$, by
$$
F(t,z)=\sum_{n=0}^\infty m_n(t) z^n
$$
where the initial value is such that, for all $t\in \dR$, $m_0(t)=1$. Let us notice that the radius of convergence, denoted $R^F(t)$, should depend on $t$ and is positive since $|m_n(t)| \leq e^{n|t|}$. Moreover, we easily have for all
$|z|<R^F(0)=1$,
$$F(0,z) = \frac{1}{1-z}.$$
Our first lemma is devoted to the calculation of 
the generating function $F$,   {see also \cite{Bryc2009} page 865, where a similar expression was given without proof. One can observe that $k_0$ should be replaced by $1-k_0$. Let us also remark that the recursive equation \eqref{RECEQ2} was already given in \cite{Friedman1949}, Section 4.}


\begin{lem}
\label{L-GeneFunction}
For all $t\in \dR$, we have
\begin{equation}
\label{Rt}
R^F(t) =  \frac{t}{e^t-1}.
\end{equation}
Moreover, for all $t \in \dR$ and for all $z \in \dC$ such that $|z| < R^F(t)$,
\begin{equation}
\label{SOLF}
F(t,z)= \frac{1-\exp(-t)}{1-\exp((e^t-1)z-t)}.
\end{equation}
\end{lem}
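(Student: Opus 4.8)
The plan is to derive a closed-form recursion for the Laplace transforms $m_n(t)$ from the martingale-type decomposition \eqref{DECDN}, translate it into a functional equation for $F(t,z)$, solve that equation explicitly, and finally read off the radius of convergence from the location of the singularity. First I would use the conditional Bernoulli structure: since $\xi_{n+1}\mid\cF_n$ is $\cB(p_n)$ with $p_n=(n-D_n)/(n+1)$, we have
\begin{equation*}
\dE[e^{tD_{n+1}}\mid\cF_n]=e^{tD_n}\bigl(1-p_n+p_ne^t\bigr)=e^{tD_n}\Bigl(1+\frac{(e^t-1)(n-D_n)}{n+1}\Bigr).
\end{equation*}
Taking expectations and multiplying by $(n+1)$ gives the recursion
\begin{equation*}
(n+1)m_{n+1}(t)=(n+1)m_n(t)+(e^t-1)\bigl(n\,m_n(t)-m_n'(t)\bigr),
\end{equation*}
where $m_n'(t)=\dE[D_ne^{tD_n}]=\partial_t m_n(t)$. (Here I would check the low-index cases $m_0=m_1=1$ separately so the recursion is valid for all $n\ge 0$.)

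Next I would multiply the recursion by $z^{n}$ and sum over $n\ge 0$. Writing $F=F(t,z)$ and noting that $\sum_n (n+1)m_{n+1}z^n=\partial_z F$, $\sum_n (n+1)m_n z^n=\partial_z(zF)=F+z\partial_zF$, and $\sum_n n\,m_n z^n=z\partial_zF$, while $\sum_n m_n'(t)z^n=\partial_t F$, the recursion becomes the first-order linear PDE
\begin{equation*}
\partial_z F=F+z(e^t-1)\partial_z F+(e^t-1)\bigl(z\partial_z F-\partial_t F\bigr),
\end{equation*}
i.e. after collecting terms,
\begin{equation*}
\bigl(1-2z(e^t-1)\bigr)\partial_z F+(e^t-1)\partial_t F=F.
\end{equation*}
Hmm — at this point I would double-check the bookkeeping, since getting the coefficients of $\partial_zF$ right is exactly where a slip would propagate. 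One can verify the answer a posteriori: plug the claimed \eqref{SOLF} into whatever PDE one obtains and confirm it solves it with $F(t,0)=m_0(t)=1$ and $F(0,z)=1/(1-z)$.

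The cleanest route, rather than solving the PDE by characteristics from scratch, is to treat \eqref{SOLF} as an ansatz: set $G(t,z)=\dfrac{1-e^{-t}}{1-\exp((e^t-1)z-t)}$, check $G(t,0)=1$ and that $G$ is analytic in $z$ near the origin (so it does have a power-series expansion $\sum g_n(t)z^n$ with $g_0=1$), and verify that $G$ satisfies the same PDE as $F$. By uniqueness of the analytic solution of a first-order linear PDE with prescribed value on $\{z=0\}$ — equivalently, by matching coefficients $g_n(t)$ against the recursion above, which determines $m_n(t)$ uniquely from $m_0(t)=1$ — we conclude $F\equiv G$. Finally, for \eqref{Rt}: the function $z\mapsto G(t,z)$ is analytic precisely until the denominator vanishes, i.e. until $(e^t-1)z-t\in 2\pi i\dZ$; the nearest such $z$ to the origin is $z=t/(e^t-1)$ (the one with the zero imaginary part), so the radius of convergence of the Taylor series of $F$ at $z=0$ is exactly $R^F(t)=t/(e^t-1)$, with the usual removable-singularity reading at $t=0$ giving $R^F(0)=1$ consistent with $F(0,z)=1/(1-z)$. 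I expect the main obstacle to be purely computational: correctly assembling the PDE for $F$ from the recursion (keeping careful track of the $z\partial_z$ versus $\partial_z(zF)$ shifts and of the $\partial_t$ term coming from $\dE[D_n e^{tD_n}]$), after which verifying the ansatz and locating the singularity are routine.
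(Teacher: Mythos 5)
Your plan matches the paper's in its first half: you derive the same recursion
\begin{equation*}
(n+1)m_{n+1}(t)=(1+ne^t)m_n(t)+(1-e^t)m_n'(t),
\end{equation*}
multiply by $z^n$, sum, and translate into a linear first-order PDE. However, your assembled PDE contains an arithmetic slip. You correctly record $\sum_n(n+1)m_n z^n=F+z\partial_z F$, but in the next line you substitute $F+z(e^t-1)\partial_z F$ for this sum, which introduces a spurious factor of $(e^t-1)$. Redoing the bookkeeping gives
\begin{equation*}
\partial_z F=(F+z\partial_z F)+(e^t-1)\bigl(z\partial_z F-\partial_t F\bigr),
\end{equation*}
and after collecting terms the coefficient of $\partial_z F$ is $1-z-(e^t-1)z=1-ze^t$, so the correct equation is $(1-ze^t)\partial_z F+(e^t-1)\partial_t F=F$, as in the paper, not $(1-2z(e^t-1))\partial_z F+(e^t-1)\partial_t F=F$. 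You flag exactly this risk yourself and propose to verify the ansatz $G(t,z)=\frac{1-e^{-t}}{1-\exp((e^t-1)z-t)}$ against ``whatever PDE one obtains''; doing so with your version would fail, and the failure would lead you to the corrected coefficient.

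Where your route genuinely diverges from the paper's is in the second half. The paper \emph{derives} the closed form by the method of characteristics (integrating the characteristic system and then pinning down the arbitrary function via $F(t,0)=1$), whereas you propose to \emph{verify} the ansatz and invoke uniqueness, either for the Cauchy problem of the PDE or, more robustly, by matching the Taylor coefficients of $G$ directly against the recursion starting from $m_0=1$. That coefficient-matching alternative is worth emphasizing: it sidesteps the PDE entirely (hence any bookkeeping slip there), it needs no analyticity-in-$t$ justification for the $\partial_t F$ term (which the paper handles via dominated convergence using $|m_n'(t)|\leq n\,m_n(t)$), and it is what actually forces $F\equiv G$ on a neighbourhood of $z=0$. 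The trade-off is that you must already know the answer; the paper's characteristics computation explains where the formula comes from. Your reading of the radius of convergence is correct: the denominator $1-\exp((e^t-1)z-t)$ vanishes exactly at $z=(t+2\pi i k)/(e^t-1)$, the numerator $1-e^{-t}\neq 0$ for $t\neq 0$, so the closest pole to the origin is at $k=0$, giving $R^F(t)=t/(e^t-1)$, and the $t\to 0$ limit recovers $R^F(0)=1$ consistently with $F(0,z)=1/(1-z)$.
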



\begin{proof}
It follows from \eqref{DECDN} that for all $t\in \dR$ and for all $n \geq 1$,
\begin{eqnarray}
m_{n+1} (t) & = & \dE\big[\exp(t D_{n+1})\big]=\dE\big[\exp(t D_n)\dE[\exp(t\xi_{n+1})|\cF_n]\big], \notag \\
& = & \dE\big[\exp(t D_n) p_n e^t+\exp(t D_n)(1-p_n)\big] , \notag \\
& = &  m_n(t)+ (e^t -1) \dE\big[p_n\exp(t D_n)\big].  
\label{RECEQ1}
\end{eqnarray}
However, we already saw that
\begin{equation*}
p_n= \frac{n-D_n}{n+1},
\end{equation*}
which implies that 
$$
\dE\big[p_n\exp(t D_n)\big]=\frac{n}{n+1}m_n(t)-\frac{1}{n+1}m_n^\prime(t).
$$
Consequently, we obtain from \eqref{RECEQ1} that for all $t \in \dR$ and for all $n\geq 1$,
\begin{equation}
\label{RECEQ2}
m_{n+1}(t)=\Big(\frac{1+n e^t}{n+1}\Big)m_n(t)+\Big(\frac{1-e^t}{n+1}\Big)m_n^\prime(t).
\end{equation}
One can observe that \eqref{RECEQ2} remains true for $n=0$.
We deduce from \eqref{RECEQ2} that for all $|z|<R^F(t)$,
\begin{eqnarray*}
\frac{\partial F(t,z)}{\partial z} & = & \sum_{n=1}^\infty n m_n(t) z^{n-1}=\sum_{n=0}^\infty (n+1) m_{n+1}(t) z^{n}, \\
& = & \sum_{n=0}^\infty (1+n e^t)m_n(t) z^n + \sum_{n=0}^\infty (1-e^t)m_n^\prime(t)z^n,\\
& = &  F(t,z)+e^t z\frac{\partial F(t,z)}{\partial z} + (1- e^t) \frac{\partial F(t,z)}{\partial t}, 
\end{eqnarray*}
where the last equality comes from the fact that $|m_n^\prime(t)| \leq n m_n(t)$ allows us to apply dominated convergence theorem in order to differentiate in $t$ the series.
Hence, we have shown that the generating function $F$ is the solution of the following partial differential equation 
\begin{equation}
\label{DIFEQF}
(1- e^t z)\frac{\partial F(t,z)}{\partial z} + (e^t-1) \frac{\partial F(t,z)}{\partial t} 
=   F(t,z)
\end{equation}
with initial value
\begin{equation}
\label{INITVAL} 
F(t,0)=m_0(t)=1.
\end{equation}
We shall now proceed as in \cite{Flajolet2005} in
order to solve the partial differential equation \eqref{DIFEQF}
via the classical method of characteristics, see e.g. \cite{Zwillinger1989}. Following this method, one first associates to the linear first-order partial differential equation \eqref{DIFEQF}, the ordinary differential system given by
\begin{equation}
    \label{ODIFSYST}
    \frac{dz}{1- e^t z} = \frac{dt}{e^t-1} = \frac{dw}{w}
\end{equation}
where $w$ stands for the generating function $F$. 
We assume in the sequel that $t>0$, inasmuch as the
proof for $t<0$ follows exactly the same lines. The equation binding $w$ and $t$ can be easily solved and we obtain that 
\begin{equation}
\label{EQC1}
w = C_1 (1-e^{-t}).
\end{equation}
The equation binding $z$ and $t$ leads to the ordinary differential equation
$$\frac{dz}{dt} = -\frac{e^t}{e^t-1} z + \frac{1}{e^t-1}.$$
We find by the variation of constant method that 
\begin{equation}
\label{EQC2}
(e^{t}-1)z-t =C_2.
\end{equation}
According to the method of characteristics, the general solution of \eqref{DIFEQF} is obtained by coupling \eqref{EQC1} and \eqref{EQC2}, namely 
\begin{equation}
\label{EQPSI}
 C_1 = f(C_2) 
\end{equation}
 {where $f$ is a function which can be explicitly calculated from the boundary value in \eqref{INITVAL}}. We deduce from the conjunction of
\eqref{EQC1}, \eqref{EQC2} and \eqref{EQPSI} that
for all $t>0$ and for all $z \in \dC$ such that $|z|<R^F(t)$,
\begin{equation}
\label{EQFPSI}
F(t,z)=(1-e^{-t})f\left((e^{t}-1)z-t\right).
\end{equation}
It only remains to determine the exact value of the function $f$ by taking into account the initial condition \eqref{INITVAL}.
We obtain from \eqref{EQFPSI} with $z=0$ and replacing $-t$ by $t$ that
\begin{equation}
\label{EPSI}
f(t) = \frac{1}{1-e^{t}}.
\end{equation}
Finally, the explicit solution \eqref{SOLF} clearly follows from \eqref{EQFPSI} and \eqref{EPSI}. Moreover, one can observe that the radius of convergence comes immediately from \eqref{SOLF}, which completes the proof of Lemma \ref{L-GeneFunction}.
\end{proof}

\noindent
The global expression \eqref{SOLF} of the generating function $F$ allows us to deduce a sharp expansion of the Laplace transform $m_n$ of $D_n$, as follows.


\begin{lem}
\label{LDEVMn}
For any $t\neq 0$, we have
\begin{equation}
\label{MNT}
m_n(t)=\Big(\frac{1-e^{-t}}{t} \Big)\Big(\frac{e^{t}-1}{t}\Big)^n\big(1+r_n(t)\big)
\end{equation}
where the remainder term $r_n(t)$ goes exponentially fast to zero as 
\begin{equation}
\label{UBRemainder}
|r_n(t)|\leq  |t|e \Bigl( 1 + \frac{1}{\pi} + \frac{2+n}{\sqrt{t^2+4\pi^2}}\Bigr) 
\Big( 1+ \frac{4 \pi^2}{t^2} \Big)^{-n/2}.
\end{equation}
\end{lem}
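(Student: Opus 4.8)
The plan is to read off the behaviour of $m_n(t)$ directly from the closed form \eqref{SOLF} by a singularity analysis. As a function of $z$, $F(t,z)=\tfrac{1-e^{-t}}{1-\exp((e^t-1)z-t)}$ is meromorphic on $\dC$ with simple poles exactly at $z_k=\tfrac{t+2\pi i k}{e^t-1}$, $k\in\dZ$; since the $z$-derivative of the denominator equals $-(e^t-1)$ at every $z_k$, each residue of $F(t,\cdot)$ equals $-e^{-t}$. Starting from $m_n(t)=\tfrac1{2\pi i}\oint_{|z|=r}F(t,z)z^{-n-1}\dd z$ for $0<r<R^F(t)$ and pushing the contour outwards across all the poles, I would use the elementary identity $|1-e^{w}|^2=(e^{\mathrm{Re}\,w}-1)^2+4e^{\mathrm{Re}\,w}\sin^2(\tfrac{\mathrm{Im}\,w}{2})$, with $w=(e^t-1)z-t$, to check that along the circles $|e^t-1|\,|z|=\sqrt{t^2+4\pi^2(N+\tfrac12)^2}$ the quantity $|1-e^{w}|$ stays bounded away from $0$ uniformly in $N$, so that for $n\ge1$ the integral over such a circle tends to $0$ as $N\to\infty$. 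This gives
\begin{equation*}
m_n(t)=\sum_{k\in\dZ}\frac{e^{-t}}{z_k^{n+1}}=e^{-t}(e^t-1)^{n+1}\sum_{k\in\dZ}\frac{1}{(t+2\pi i k)^{n+1}},
\end{equation*}
whose $k=0$ term is exactly the leading factor $\bigl(\tfrac{1-e^{-t}}{t}\bigr)\bigl(\tfrac{e^t-1}{t}\bigr)^n$ of \eqref{MNT}; hence the remainder is the (for $n\ge1$) absolutely convergent series $r_n(t)=\sum_{k\ne0}\bigl(\tfrac{t}{t+2\pi i k}\bigr)^{n+1}$.

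From here \eqref{UBRemainder} is a matter of estimating a numerical series. Taking absolute values gives
\begin{equation*}
|r_n(t)|\le\sum_{k\ne0}\Bigl(\frac{|t|}{\sqrt{t^2+4\pi^2k^2}}\Bigr)^{n+1}=2\sum_{k\ge1}\Bigl(\frac{t^2}{t^2+4\pi^2k^2}\Bigr)^{(n+1)/2},
\end{equation*}
and the terms $k=\pm1$ already carry the announced decay, since $2\bigl(1+\tfrac{4\pi^2}{t^2}\bigr)^{-(n+1)/2}=\tfrac{2|t|}{\sqrt{t^2+4\pi^2}}\bigl(1+\tfrac{4\pi^2}{t^2}\bigr)^{-n/2}$, which is dominated by the $\tfrac{2+n}{\sqrt{t^2+4\pi^2}}$-part of the prefactor in \eqref{UBRemainder}. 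For the tail $k\ge2$ I would pass, by monotonicity, to the integral $\int_1^{\infty}\bigl(1+\tfrac{4\pi^2x^2}{t^2}\bigr)^{-(n+1)/2}\dd x$, which under the substitution $2\pi x=|t|\tan\theta$ becomes $\tfrac{|t|}{2\pi}\int_{\arctan(2\pi/|t|)}^{\pi/2}\cos^{n-1}\theta\,\dd\theta$; bounding this truncated Wallis integral and then collecting the elementary constants (for instance $(1+\tfrac1n)^n\le e$, Jordan's inequality $|\sin x|\ge\tfrac2\pi|x|$ on $[-\tfrac\pi2,\tfrac\pi2]$, and $\arctan\le\tfrac\pi2$) produces the prefactor $|t|e\bigl(1+\tfrac1\pi+\tfrac{2+n}{\sqrt{t^2+4\pi^2}}\bigr)$.

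The hard part is the series estimate: $\sum_{k\ge1}(t^2/(t^2+4\pi^2k^2))^{(n+1)/2}$ behaves quite differently in different regimes — for $|t|$ small it is governed by its first term, whereas for $|t|$ large the first $\asymp|t|$ terms are all comparable and the sum is of order $|t|/\sqrt n$ — so squeezing out the \emph{single} explicit prefactor of \eqref{UBRemainder}, uniformly over all $t\ne0$ and all $n$, forces one to split the range of $|t|$ (roughly at $|t|\asymp 2\pi$ and $|t|\asymp\sqrt n$) and argue separately in each piece; the degenerate case $n=0$, in which the above series is only conditionally convergent, must be handled directly, using the extra constant term $\tfrac{1-e^{-t}}{2}$ in the partial-fraction expansion of $F(t,\cdot)$. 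An equivalent, and for some purposes more robust, variant avoids the behaviour at infinity altogether: push the extraction contour only past the first pole $z_0$, so that $r_n(t)=\tfrac{t}{1-e^{-t}}\bigl(\tfrac{t}{e^t-1}\bigr)^n\cdot\tfrac1{2\pi i}\oint_{|z|=\rho}F(t,z)z^{-n-1}\dd z$ with $R^F(t)<\rho<|z_{\pm1}|$, substitute $w=(e^t-1)z-t$, and estimate $\tfrac1{2\pi}\bigl(\tfrac{|t|}{|e^t-1|\rho}\bigr)^{n+1}\oint_{|w+t|=|e^t-1|\rho}|1-e^{w}|^{-1}\,|\dd w|$; taking $\rho$ close to $|z_{\pm1}|$ recovers the rate $\bigl(1+\tfrac{4\pi^2}{t^2}\bigr)^{-n/2}$, the cost being that one must now bound $|1-e^{w}|$ from below on a contour pressed up against the poles $\pm 2\pi i$ of $1-e^w$.
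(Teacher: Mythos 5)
Your primary route is genuinely different from the paper's, and the exact identity $r_n(t)=\sum_{k\ne0}\bigl(\frac{t}{t+2\pi ik}\bigr)^{n+1}$ you obtain for $n\ge1$ is correct: the residue of $F(t,\cdot)$ at each $z_k$ is indeed $-e^{-t}$, and the outer contour integrals do vanish for $n\ge1$ since $F(t,\cdot)$ stays uniformly bounded on the circles you chose. The paper, by contrast, isolates only the pole at $z_0=t/(e^t-1)$: it writes $\cF=\cG+\cH$ with $h(z)=\frac{1}{1-e^z}+\frac1z$ holomorphic near $0$ and bounds the Taylor coefficient $m_n^\cH(t)$ by Cauchy's inequality on a circle of radius $\rho(t,\alpha)$ strictly between $|z_0|$ and $|z_{\pm1}|$, with $\alpha$ tuned to produce the factor $e$ and the $(2+n)/\sqrt{t^2+4\pi^2}$ piece of the prefactor. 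Your residue expansion buys an explicit remainder whose large-$n$ behavior is governed transparently by the $k=\pm1$ terms, but at the price that the uniform quantitative bound becomes a real piece of work: the crude tail estimate $\int_a^\infty(1+u^2)^{-(n+1)/2}\,du\le\frac{1}{a(n-1)}(1+a^2)^{-(n-1)/2}$ is only sharp when $a=2\pi/|t|$ is not small, so the regime $|t|\gg1$ (where the series is of order $|t|/\sqrt{n}$ and governed by the full Wallis integral rather than its first term) needs a separate argument, as you yourself diagnose. The paper's single-circle Cauchy bound sidesteps this at the cost of a somewhat wasteful constant. Your $n=0$ caveat is no handicap: the paper's own estimate on $m_n^\cH$ is also stated only for $n\ge1$.

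That said, the proposal as written does not actually establish \eqref{UBRemainder}: you identify the right ingredients (the $k=\pm1$ term is absorbed by the $(2+n)/\sqrt{t^2+4\pi^2}$ piece of the prefactor, the tail is a truncated Wallis integral), but you leave the collation of constants and the required split on the range of $|t|$ as a sketch and flag that explicitly. Until that bookkeeping is carried out --- or unless you pass to your second variant, which is in essence the paper's argument (Cauchy's inequality on a circle pressed toward $|z_{\pm1}|$, with the radius tuned to balance the exponential gain against the blowup of $|1-e^{w}|^{-1}$) --- the proof is incomplete.
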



%
%

\begin{proof}
In all the proof, we assume that $t \neq 0$. It follows from  \eqref{SOLF} that $F$ is a meromorphic function on $\dC$ with simple poles given, for all $\ell \in \dZ$, by
\begin{equation}
\label{POLES}
z_\ell^F(t) = \frac{t+2i \ell \pi}{e^t-1}.
\end{equation}
By a slight abuse of notation, we still denote by $F$ this meromorphic extension. 
Hereafter, for the sake of simplicity, we shall consider the function $\cF$ defined,
for all $z \in \dC$, by
\begin{equation}
\label{DEFCF}
\cF(t,z)= \frac{ 1}{1-e^{-t}} F(t,z)=f( \xi(t,z))
\end{equation}
where the function $f$ was previously defined in \eqref{EPSI} and the function $\xi$ is given, for all $z \in \dC$, by
\begin{equation}
\label{DEFXI}
\xi(t,z)= (e^{t}-1)z-t.
\end{equation}
By the same token, we shall also introduce the functions $\cG$ and $\cH$ defined,
for all $z \in \dC$, by
\begin{equation}
\label{DEFCGH}
\cG(t,z)= g( \xi(t,z)) \hspace{1cm} \text{and} \hspace{1cm} \cH(t,z)= h( \xi(t,z))
\end{equation}
where $g$ and $h$ are given, for all $z \in \dC^*$, by
\begin{equation}
\label{gh}  
g(z)= -\frac{1}{z} \hspace{1cm} \text{and} \hspace{1cm} h(z)=  \frac{1}{1-e^z} + \frac{1}{z}.
\end{equation}
One can immediately observe from \eqref{gh} that $\cH = \cF-\cG$ which means that we have subtracted to $\cF$ its simple pole at $0$ to get $\cH$.
Given a function $\cK(t,z)$ analytic in $z$ on some set $\{(t,z)\in \dR\times \dC, |z|\leq R^\cK(t) \}$, we denote by $m_n^\cK(t)$ the coefficient of its Taylor series at point $(t,0)$, that is 
\[
\cK(t,z)= \sum_{n=0}^\infty m_n^\cK(t) z^n.
\]
Thanks to this notation, we clearly have $m_n^{\cF} (t)= m_n^{\cG} (t)+ m_n^{\cH} (t)$. Moreover, we deduce from \eqref{DEFCF} that
\begin{equation}
\label{DECMNF}
m_n^{\cF} (t)= \frac{ 1}{1-e^{-t}} m_n^F(t).
\end{equation}
The first coefficient $m_n^{\cG} (t)$ can be explicitly computed by
\begin{equation}
\label{CALCMNG}
m_n^{\cG} (t)=\frac{1}{t}  \Big(\frac{e^t-1}{t}\Big)^n.
\end{equation}
As a matter of fact, for all $z \in \dC$ such that $|z| < R^{\cG}(t)=t(e^t-1)^{-1}$, it follows from \eqref{DEFXI} and \eqref{DEFCGH} that
$$
\cG(t,z)=-\frac{1}{\xi(t,z)}= \frac{1}{t-(e^{t}-1)z} = 
\frac{1}{t}  \sum_{n= 0}^\infty  \Big(\frac{e^t-1}{t}\Big)^n z^n.
$$
Consequently, as $m_n(t)=m_n^F(t)$, we obtain from \eqref{DECMNF} that
$$
m_n(t) = (1- e^{-t}) \big( m_n^{\cG} (t)+ m_n^{\cH} (t) \big)= (1- e^{-t}) m_n^{\cG}(t) \big(1+r_n(t)\big)
$$
which leads via \eqref{CALCMNG} to
\begin{equation}
\label{CALCMN}
m_n(t)= \Big(\frac{1-e^{-t}}{t} \Big)\Big(\frac{e^{t}-1}{t}\Big)^n\big(1+r_n(t)\big)
\end{equation}
where the remainder term $r_n(t)$ is the ratio $r_n(t)=m_n^{\cH} (t)/m_n^{\cG} (t)$. From now on, we shall focus
our attention on a sharp upper bound for $m_n^{\cH} (t)$. The function $h$ is meromorphic with simple poles at the points $2i\pi \dZ^*$.
Moreover, for a given $t \neq 0$, $z$ is a pole of $\cH$ if and only if $(e^t-1)z - t$ is a pole of $h$. Hence, the poles of $\cH$ are given, for all $\ell \in \dZ^*$, by 
$$
z_\ell^\cH(t)= \frac{t+2i \ell \pi}{e^t-1}.
$$
In addition, its radius of convergence $R^\cH (t)$ is nothing more than the shortest distance between 0 and one of these poles. Consequently, we obtain that
$$
R^\cH(t)= |z_1^\cH(t)|=  \frac{t}{e^t-1} \sqrt{1 + \frac{4\pi^2}{t^2}}.
$$
Furthermore, it follows from Cauchy's inequality that for any $0<\rho(t)<R^\cH(t)$,
\begin{equation}
\label{CAUCHYMN}
|m_n^\cH (t)| \leq  \frac{ \Vert \cH(t,\cdot) \Vert_{\infty, \cC(0,\rho(t))}} {\rho(t)^n }
\end{equation}
where the norm in the numerator stands for
$$ \Vert \cH(t,.)\Vert_{\infty, \cC(0,\rho(t))}= \sup\{ |\cH(t,z)|, |z|= \rho(t)\}.$$
Since $\xi(t,\cC(0,\rho(t)))$ coincides with the circle $\cC( -t, |e^t- 1| \rho(t))$, we deduce from the identity $\cH(t,z)= h( \xi(t,z))$ that 
$$
\Vert \cH(t,\cdot) \Vert_{\infty, \cC(0,\rho(t))} = \Vert h \Vert_{\infty, \cC(-t, |e^t-1|\rho(t))}.
$$
Hereafter, we introduce a radial parameter 
\begin{equation}
\label{CHOICERTA}
\rho(t,\alpha)=\frac{t}{e^t-1} \sqrt{1 + \frac{4 \alpha \pi^2}{t^2}}
\end{equation}
where $\alpha$ is a real number in the interval $]\!-t^2/4\pi^2, 1[$.
We also define the distance between the circle $\cC(-t, |e^t-1|\rho(t, \alpha))$ and the set of the poles of $h$,
\begin{equation*} 
\delta(t,\alpha)= d ( \cC(-t, |e^t-1|\rho(t, \alpha)) , 2i\pi \dZ^*).
\end{equation*}
We clearly have from the Pythagorean theorem that
\begin{equation*} 
   \delta(t,\alpha)= \sqrt{t^2+ 4\pi^2} - \sqrt{t^2+ 4\alpha \pi^2}.
\end{equation*}
In addition, one can easily check that 
$$
\delta(t,\alpha)= \frac{4\pi^2(1-\alpha)}{\sqrt{t^2+ 4\pi^2} + \sqrt{t^2+ 4\alpha \pi^2}}
$$
which ensures that
\begin{equation} \label{DISTPOLEESTIMATE}
   \frac{2\pi^2(1-\alpha)}{\sqrt{t^2+ 4\pi^2} } <  \delta(t,\alpha) < \frac{4\pi^2(1-\alpha)}{\sqrt{t^2+ 4\pi^2} }
\end{equation}
It follows from the maximum principle that
$$
\Vert h \Vert_{\infty, \cC(-t, |e^t-1|\rho(t, \alpha))} \leq \Vert h \Vert_{\infty,\partial \cD (L,\Lambda, \delta(t,\alpha))}
$$
where, for $L>0$ and $\Lambda>0$ large enough, $\cD (L,\Lambda,\delta(t,\alpha))=\cB(L,\Lambda)\cap  {\cA_h(\delta(t,\alpha))}^{ \complement}$ is the domain given by
the intersection of the box
$$
\cB (L,\Lambda)= \{ z\in \dC, |\Re(z)|<L, |\Im(z)|<\Lambda\}
$$
and the complementary set of 
$$
\cA_h (\delta(t,\alpha))= \{ z\in \dC, d(z,2i  \pi \dZ^*) \leq \delta(t,\alpha) \text{ with } |\Im(z)| \geq \pi\}.$$
On the one hand, we have for all $y \in \dR$, $|e^{L+iy}-1|\geq e^{L}-1$ and $|L+iy| \geq L$, implying that for all $y \in \dR$,
$$\Big|h(L+iy) \Big|\leq \frac{1}{e^{L}-1} + \frac{1}{L}.$$
By the same token, we also have for all $y \in \dR$, $|e^{-L+iy}-1|\geq 1- e^{-L}$ and $|-L+iy| \geq L$, leading, for all $y \in \dR$, to
$$\Big|h(-L+iy )\Big|\leq \frac{1}{1- e^{-L}} + \frac{1}{L}.$$
On the other hand, we can choose $\Lambda$ of the form $\Lambda=(2 k+1) \pi $ for a value $k\in \dN^*$ large enough.
Then, we have for all $x \in \dR$, $\exp(x+(2 k+1) i\pi)=-\exp(x)$ and $|x +(2 k+1) i\pi| \geq (2k+1) \pi$, implying that for all $x \in \dR$,
\begin{equation}
\label{ESTh-LIGNE}
\Big|h(x+(2 k+1)i\pi)\Big|\leq 1 + \frac{1}{(2 k+1) \pi}.
\end{equation}
By letting $L$ and $\Lambda$ go to infinity, we obtain that
$$
\Vert h \Vert_{\infty, \cC(-t, (e^t-1)\rho(t, \alpha))} \leq \max\big(1,\Vert h \Vert_{\infty,\partial \cD (\delta(t,\alpha))}\big)
$$
where $\cD (\delta(t,\alpha))$ is the domain 
$
\cD (\delta(t,\alpha))= {\cA_h(\delta(t,\alpha))}^{ \complement}.
$
We clearly have from \eqref{gh} that for all
$z\in \partial \cD (\delta(t,\alpha))$ with $|\Im(z)| > \pi$, 
\begin{equation}
\label{ESTh}
|h(z)|\leq  |f(z)| + 
\frac{1}{|z|}
\leq |f(z)| +
\frac{1}{\pi}.
\end{equation}
Moreover, it follows from tedious but straightforward calculations that
$$
\inf_{z \in \dC, |z|= \delta(t,\alpha)}\big| 1-e^z \big|= 1-e^{-\delta(t,\alpha)},
$$
which ensures that
\begin{equation}\label{ESTf}
|f(z)| \leq\frac{1}{1-e^{-\delta(t, \alpha)}}.
\end{equation}
In addition, we obtain from \eqref{ESTh-LIGNE} that for all $z\in \partial \cD (\delta(t,\alpha))$ with $|\Im(z)|=\pi$,
\begin{equation}
\label{ESTh-LIGNE2}
|h(z)| \leq 1+  \frac{1}{\pi}.
\end{equation}
Hence, we find from  \eqref{ESTh}, \eqref{ESTf} and \eqref{ESTh-LIGNE2} that
\begin{equation*}
\Vert h \Vert_{\infty,\partial \cD (\delta(t,\alpha))} \leq \frac{1}{1-e^{-\delta(t, \alpha)}}+ \frac{1}{\pi}. 
\end{equation*}
We were not able to find an explicit maximum for the previous upper-bound. However, it is not hard to see that
\begin{equation*}
\frac{1}{1-e^{-\delta(t, \alpha)}}\leq 1+\frac{1}{\delta(t, \alpha)},
\end{equation*}
which gives us
\begin{equation}
\label{ESThnotex}
\Vert h \Vert_{\infty,\partial \cD (\delta(t,\alpha))} \leq  1+ \frac{1}{\pi}+\frac{1}{\delta(t, \alpha)}. 
\end{equation}

Consequently,
we deduce from \eqref{CAUCHYMN}, \eqref{CHOICERTA}, \eqref{DISTPOLEESTIMATE} and \eqref{ESThnotex} that for all $t \neq 0$ and for all $n \geq 1$,
\begin{equation}
\label{ESTIMATION:mnH1}
|m_n^\cH(t)| \leq \Big(\frac{e^t -1 }{t}\Big)^n \varphi_n(t,\alpha)  
\end{equation}
where
\begin{equation}
\label{DEFvarphin}
\varphi_n(t,\alpha)= \Big(  1+ \frac{1}{\pi}+ \frac{\sqrt{t^2+ 4\pi^2} }{2\pi^2(1-\alpha)}  \Big) \Big( 1+ \frac{4 \alpha \pi^2}{t^2} \Big)^{-n/2}.
\end{equation}
For the sake of simplicity, let $\Phi$ be the function defined, for all 
$\alpha \in ]\!-t^2/4\pi^2,1[$, by
$$
\Phi(\alpha)= \Big( \frac{1 }{1-\alpha} \Big) \Big( 1+ \frac{4 \alpha \pi^2}{t^2} \Big)^{-n/2}.
$$
One can easily see that $\Phi$ is a convex function reaching its minimum for the value
$$
 \alpha=1-\Big(1 + \frac{t^2}{4\pi^2}\Big)\Big(1+\frac{n}{2}\Big)^{-1}.
$$
 {Some numerical experiments show that this explicit value seems to be not far from being the optimal value} that minimizes $\varphi_n(t,\alpha)$.
By plugging $\alpha$ into \eqref{DEFvarphin}, we obtain from \eqref{ESTIMATION:mnH1} that for all $t \neq 0$ and for all $n \geq 1$,
\begin{eqnarray}
|m_n^\cH(t)| &\leq& \Big(\frac{e^t -1 }{t}\Big)^n
\Bigl( 1 + \frac{1}{\pi} + \frac{2+n}{\sqrt{t^2+4\pi^2}}\Bigr) \Big( 1+ \frac{2}{n} \Big)^{n/2}
\Big( 1+ \frac{4 \pi^2}{t^2} \Big)^{-n/2}, \notag \\
&\leq& e\Big(\frac{e^t -1 }{t}\Big)^n \Bigl( 1 + \frac{1}{\pi} + \frac{2+n}{\sqrt{t^2+4\pi^2}}\Bigr) 
\Big( 1+ \frac{4 \pi^2}{t^2} \Big)^{-n/2}.
\label{ESTIMATION:mnH2}
\end{eqnarray}
Finally, \eqref{UBRemainder} follows from \eqref{CALCMNG} together with \eqref{ESTIMATION:mnH2}, which completes the proof of Lemma \ref{LDEVMn}.

\end{proof}
\noindent
We now focus our attention on a complex estimate of the Laplace transform $m_n$ of $D_n$ since $m_n$ clearly extends to an analytic function on $\dC$. More precisely, our goal is to compute an estimate 
of $m_n(t+iv)$ for $t \neq 0$ and for $v \in \dR$ such that $|v|< \pi$. Note that $m_n$ is $2i\pi$ periodic.

\begin{lem}
\label{LDEVMncompl}
For any $t \neq 0$ and for all $v \in \dR$ such that $|v|<\pi$, we have
\begin{equation}
\label{MNTcompl}
m_n(t+iv)=\Big(\frac{1-e^{-(t+iv)}}{t+iv} \Big)\Big(\frac{e^{t+iv}-1}{t+iv}\Big)^n\big(1+r_n(t+iv)\big)
\end{equation}
where the remainder  term $r_n(t+iv)$ is exponentially negligible and satisfies
\begin{equation}
\label{UBComplexRemainder}
|r_n(t+iv)|\leq   \sqrt{t^2+v^2} \Big( 1 + \frac{1}{\pi} + \frac{\sqrt{t^2+4\pi^2}}{\pi(\pi-|v|)} \Big)\Big( \frac{t^2+v^2}{t^2+\pi^2}\Big)^{n/2}.
\end{equation}
Moreover, for any $t \neq 0$ and for all $v \in \dR$ such that $|v|\leq \pi$, we also have the alternative upper-bound
\begin{eqnarray}
\label{UBComplexRemainder2}
\nonumber
|m_n(t+iv)| &\leq &   |1-e^{-(t+iv)}|\Big(\frac{e^t-1}{t}\Big)^n\Bigg( \frac{1}{\sqrt{t^2+ v^2}} \exp \Big(\!\! -n \frac{t^2 L''(t)}{t^2+\pi^2} \frac{v^2}{2} \Big) \\
& + & 
\Big(1+\frac1\pi+\frac{ 2\sqrt{t^2+\pi^2} }{\pi^2 -4} \Big) \exp \Big(\!\!-n \frac{4t^2 L''(t)}{\pi^2(t^2+4)} \frac{v^2}{2} \Big) \Bigg)
\end{eqnarray}
where the second derivative of the asymptotic cumulant generating function $L$ is the positive funnction given by
\begin{equation}
\label{SECONDDERL}
L''(t) = \frac{(e^t-1)^2 -t^2 e^t}{(t(e^t-1))^2}.
\end{equation}

\end{lem}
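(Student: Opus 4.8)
The plan is to run the argument of Lemma~\ref{LDEVMn} with the real parameter $t$ replaced by $t+iv$. Since $D_n$ is integer-valued, $m_n$ extends to an entire $2i\pi$-periodic function, so it suffices to treat $|v|\leq\pi$. For $|z|$ small the series $F(t+iv,z)=\sum_{n\geq0}m_n(t+iv)z^n$ still converges, and both sides of \eqref{SOLF} being holomorphic in the first variable, formula \eqref{SOLF} persists with $t$ replaced by $t+iv$; hence $F(t+iv,\cdot)$ is meromorphic with simple poles $(t+iv+2i\ell\pi)/(e^{t+iv}-1)$, $\ell\in\dZ$. Carrying out verbatim the splitting $\cF=\cG+\cH$ of Lemma~\ref{LDEVMn}, with $\xi(t+iv,z)=(e^{t+iv}-1)z-(t+iv)$, one gets
\[
m_n(t+iv)=\bigl(1-e^{-(t+iv)}\bigr)\bigl(m_n^{\cG}(t+iv)+m_n^{\cH}(t+iv)\bigr),\qquad m_n^{\cG}(t+iv)=\frac{1}{t+iv}\Bigl(\frac{e^{t+iv}-1}{t+iv}\Bigr)^n,
\]
so that setting $r_n(t+iv)=m_n^{\cH}(t+iv)/m_n^{\cG}(t+iv)$ yields \eqref{MNTcompl}, and everything reduces to estimating $m_n^{\cH}(t+iv)$.

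The poles of $\cH(t+iv,\cdot)=h(\xi(t+iv,\cdot))$ are the $(t+iv+2i\ell\pi)/(e^{t+iv}-1)$ with $\ell\in\dZ^*$, the one closest to $0$ having modulus $\sqrt{t^2+(2\pi-|v|)^2}/|e^{t+iv}-1|$ because $\sqrt{t^2+(v+2\ell\pi)^2}$ is minimized over $\ell\in\dZ^*$ at $\ell=\pm1$. I would then apply Cauchy's inequality \eqref{CAUCHYMN} on $\cC(0,\rho)$ with, for \eqref{UBComplexRemainder}, $\rho=\sqrt{t^2+\pi^2}/|e^{t+iv}-1|$ (admissible when $|v|<\pi$), and, for \eqref{UBComplexRemainder2}, $\rho=\sqrt{t^2+4}/|e^{t+iv}-1|$ (admissible for all $|v|\leq\pi$ since $4<\pi^2\leq(2\pi-|v|)^2$). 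As $\xi(t+iv,\cdot)$ maps $\cC(0,\rho)$ onto the circle $\cC(-(t+iv),R)$ with $R=|e^{t+iv}-1|\rho\in\{\sqrt{t^2+\pi^2},\sqrt{t^2+4}\}$, this reduces matters to bounding $\|h\|_{\infty,\cC(-(t+iv),R)}$. The distance from $\cC(-(t+iv),R)$ to the poles $2i\pi\dZ^*$ of $h$ equals, by the Pythagorean computation of Lemma~\ref{LDEVMn}, $\delta=\sqrt{t^2+(2\pi-|v|)^2}-R$, bounded below by $\pi(\pi-|v|)/\sqrt{t^2+4\pi^2}$ when $R=\sqrt{t^2+\pi^2}$ and by $(\pi^2-4)/(2\sqrt{t^2+\pi^2})$ when $R=\sqrt{t^2+4}$. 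The maximum-principle estimate of Lemma~\ref{LDEVMn} --- enclosing the circle in a large box minus arcs of radius $\delta$ around the poles, on which $|f|\leq(1-e^{-\delta})^{-1}\leq1+1/\delta$ --- then gives $\|h\|_{\infty,\cC(-(t+iv),R)}\leq1+\tfrac1\pi+\tfrac1\delta$. Feeding the first choice into $r_n=m_n^{\cH}/m_n^{\cG}$ produces exactly \eqref{UBComplexRemainder}, and the second gives $|m_n^{\cH}(t+iv)|\leq\bigl(1+\tfrac1\pi+\tfrac{2\sqrt{t^2+\pi^2}}{\pi^2-4}\bigr)\bigl(|e^{t+iv}-1|/\sqrt{t^2+4}\bigr)^n$.

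To finish \eqref{UBComplexRemainder2} I would convert the moduli into $\bigl(\tfrac{e^t-1}{t}\bigr)^n$ times a Gaussian factor in $v$. The elementary identities $|e^{t+iv}-1|^2=(e^t-1)^2+2e^t(1-\cos v)$, $1-\cos v=2\sin^2(v/2)$ and $e^t/(e^t-1)^2=1/(4\sinh^2(t/2))$ give
\[
\frac{|e^{t+iv}-1|^2}{t^2+v^2}=\Bigl(\frac{e^t-1}{t}\Bigr)^2\frac{t^2}{t^2+v^2}\Bigl(1+\frac{\sin^2(v/2)}{\sinh^2(t/2)}\Bigr),\qquad \frac{|e^{t+iv}-1|^2}{t^2+4}=\Bigl(\frac{e^t-1}{t}\Bigr)^2\frac{t^2\bigl(1+4b\sin^2(v/2)\bigr)}{t^2+4},
\]
where $b=1/(4\sinh^2(t/2))=1/t^2-L''(t)$ by \eqref{SECONDDERL}. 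For the first identity, dominating $\log(1+4b\sin^2(v/2))$ by the tangent line of $x\mapsto\log(1+x)$ at $x=v^2/t^2$ and using $\sin^2(v/2)\leq v^2/4$ together with $|v|\leq\pi$ yields $\frac{|e^{t+iv}-1|^2}{t^2+v^2}\leq\bigl(\tfrac{e^t-1}{t}\bigr)^2\exp\bigl(-\tfrac{t^2L''(t)}{t^2+\pi^2}v^2\bigr)$. For the second, the factorization $1+4b=(1+\tfrac4{t^2})(1-\tfrac{4t^2L''(t)}{t^2+4})$ with $\tfrac{4t^2L''(t)}{t^2+4}<1$, Jordan's inequality $\sin^2(v/2)\geq v^2/\pi^2$ on $[0,\pi]$, and the monotonicity of $s\mapsto\log\tfrac{t^2+4}{t^2}-\tfrac{4t^2L''(t)}{t^2+4}s-\log(1+4bs)$ on $[0,1]$ yield $\frac{|e^{t+iv}-1|^2}{t^2+4}\leq\bigl(\tfrac{e^t-1}{t}\bigr)^2\exp\bigl(-\tfrac{4t^2L''(t)}{\pi^2(t^2+4)}v^2\bigr)$. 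Inserting these into $|m_n(t+iv)|\leq|1-e^{-(t+iv)}|\,\bigl(|m_n^{\cG}(t+iv)|+|m_n^{\cH}(t+iv)|\bigr)$ gives \eqref{UBComplexRemainder2}.

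The main effort here is bookkeeping rather than anything conceptual. The two points to watch are: checking that $\cC(-(t+iv),R)$ stays clear of \emph{every} pole in $2i\pi\dZ^*$, which holds precisely because $\sqrt{t^2+(v+2\ell\pi)^2}$ is minimized at $\ell=\pm1$, so that the value of $\delta$ is the one written above; and the verbatim transfer of the box/maximum-principle estimate of Lemma~\ref{LDEVMn} to a circle whose centre is now off the real axis. Once these are in place, the two choices of radius ($\sqrt{t^2+\pi^2}$ for \eqref{UBComplexRemainder}, $\sqrt{t^2+4}$ for \eqref{UBComplexRemainder2}) and the short chain of elementary logarithmic inequalities make every constant come out exactly as stated.
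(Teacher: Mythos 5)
Your proof is correct and follows the paper's overall strategy (analytic extension in the first variable, the $\cG+\cH$ splitting inherited from Lemma~\ref{LDEVMn}, Cauchy estimates on a well-chosen circle); for \eqref{UBComplexRemainder} your radius and distance bounds agree with the paper's exactly. The one genuine divergence is in \eqref{UBComplexRemainder2}: the paper takes the \emph{$v$-dependent} radius
$\rho(t+iv)=\sqrt{t^2+\beta(v)v^2}/|e^{t+iv}-1|$ with $\beta(v)v^2=2(1-\cos v)$, so that the ratio
$|e^{t+iv}-1|^2/(t^2+\beta(v)v^2)$ admits the clean algebraic identity
$\bigl(\tfrac{e^t-1}{t}\bigr)^2\bigl(1-t^2L''(t)\tfrac{\beta(v)v^2}{t^2+\beta(v)v^2}\bigr)$,
after which $1-x\le e^{-x}$ and the elementary bound $\tfrac{\beta(v)v^2}{t^2+\beta(v)v^2}\ge\tfrac{4v^2}{\pi^2(t^2+4)}$ finish the job. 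You instead fix the radius $\sqrt{t^2+4}/|e^{t+iv}-1|$, which coincides with the paper's choice only at $|v|=\pi$; the price is that turning $\bigl(|e^{t+iv}-1|/\sqrt{t^2+4}\bigr)^n$ into $\bigl(\tfrac{e^t-1}{t}\bigr)^n$ times a Gaussian factor is no longer a one-line algebraic rearrangement, and you need the factorization $1+4b=(1+\tfrac4{t^2})(1-\tfrac{4t^2L''(t)}{t^2+4})$, Jordan's inequality, and a monotonicity argument for the auxiliary function $s\mapsto\log\tfrac{t^2+4}{t^2}-\tfrac{4t^2L''(t)}{t^2+4}s-\log(1+4bs)$ on $[0,1]$. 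I checked these steps and they are sound; because the extremal case $|v|=\pi$ is where your radius meets the paper's and where the distance $\delta$ to the poles $2i\pi\dZ^*$ attains its minimum $\sqrt{t^2+\pi^2}-\sqrt{t^2+4}$ in both constructions, you end up with exactly the same prefactor $1+\tfrac1\pi+\tfrac{2\sqrt{t^2+\pi^2}}{\pi^2-4}$ and the same Gaussian exponent. Similarly, your tangent-line bound for $\log(1+x)$ reproduces \eqref{ESTIMATION:mnG3} where the paper just bounds $\beta(v)\le 1$ and $v^2\le\pi^2$ directly in \eqref{MAJORATION}. Net effect: same constants, slightly more analysis on your side in exchange for a conceptually simpler (fixed) radius; the paper's $v$-dependent radius trades that simplicity for algebra that closes instantly.
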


\begin{proof}
We still assume in all the sequel that $t \neq 0$. We shall also extend $F(t,z)$ in the complex plane with respect of the first variable, 
$$
F(t+iv,z) = \sum_{n= 0}^\infty m_n(t+iv) z^n
$$
where the initial value is such that $m_0(t+iv)=1$.
Since $|m_n(t+iv)|\leq  m_n(t)$, the radius of convergence in $z$ of $F(t+iv, \cdot )$ is at least the one for $v=0$. 
Moreover, the poles of $F(t+iv,\cdot)$ are given, for all $\ell \in \dZ$, by
\begin{equation}
    \label{POLESV}
    z_\ell^F(t+iv) = \frac{(t+iv)+2i \ell \pi}{e^{(t+iv)}-1}.
\end{equation}
Consequently, for all $v \in \dR$ such that $|v|< \pi$,  
$$R^F (t+iv)= \frac{|t+iv|}{|e^{t+iv}-1|}.$$
As in the proof of Lemma \ref{LDEVMn}, we can split $F(t+iv,z)$ into two terms,
$$
F(t+iv,z)= \big(1-e^{-(t+iv)}\big) \big( \cG(t+iv,z) + \cH(t+iv,z) \big)
$$
where we recall from \eqref{DEFCGH} that for all $z \in \dC$ and for all $v \in \dR$ such that $|v|< \pi$,
$$
\cG(t+iv,z)=  g ( \xi(t+iv,z) ) \hspace{1cm} \text{and} \hspace{1cm}  \cH(t+iv,z)=  h ( \xi(t+iv,z) )
$$
where $g$ and $h$ are given, for all $z \in \dC^*$, by
$$
g(z)= -\frac{1}{z} \hspace{1cm} \text{and} \hspace{1cm} h(z)=  \frac{1}{1-e^z} + \frac{1}{z}
$$
and the function $\xi$ is such that
$$
\xi(t+iv,z)= (e^{t+iv}-1)z-(t+iv).
$$
By holomorphic extension, we deduce from \eqref{CALCMNG} that 
$$
m_n^{\cG} (t+iv)=\frac{1}{t+iv}  \Big(\frac{e^{t+iv}-1}{t+iv}\Big)^n.
$$
Moreover, the poles of $\cH(t+iv)$ are given, for all $\ell \in \dZ^*$, by 
$$
z_\ell^\cH(t+iv)= \frac{t + i(v+2 \ell \pi) }{e^{t+iv}-1}.
$$
Hence, we obtain that for all $v \in \dR$ such that $|v|< \pi$,
$$
R^\cH(t+iv) = \frac{ \sqrt {t^2 + (2  \pi -|v|) ^2} }{|e^{t+iv}-1|} > \frac{ \sqrt {t^2 + v ^2} }{|e^{t+iv}-1|} = R^F(t+iv).
$$
It follows once again from Cauchy's inequality that for any $0<\rho(t+iv)<R^\cH(t+iv)$,
\begin{equation}
\label{CAUCHYMNC}
|m_n^\cH (t+iv)| \leq  \frac{ \Vert \cH(t+iv,\cdot) \Vert_{\infty, \cC(0,\rho(t+iv))}} {\rho(t+iv)^n }
\end{equation}
where the norm in the numerator stands for
$$ \Vert \cH(t+iv,.)\Vert_{\infty, \cC(0,\rho(t+iv))}= \sup\{ |\cH(t+iv,z)|, |z|= \rho(t+iv)\}.$$
Since the image of the circle $ \cC(0,\rho(t+iv))$ by the application $\xi(t+iv,\cdot)$ coincides with the circle $\cC(- (t+iv), |e^{t+iv}-1| \rho(t+iv))$, we obtain from 
$\cH(t+iv,z)=  h ( \xi(t+iv,z) )$ that
$$
\Vert \cH(t+iv,\cdot) \Vert_{\infty, \cC(-t,\rho(t+iv))} = \Vert h \Vert_{\infty, \cC(-(t+iv), |e^{t+iv}-1| \rho(t+iv))}.
$$
Hereafter, since $|v|< \pi$, one can take the radius
\begin{equation}\label{CHOICErho_t}
 \rho(t+iv) = \frac{\sqrt{t^2 + \pi^2}}{|e^{t+iv}-1|}.
\end{equation}
Moreover, as in the proof of Lemma \ref{LDEVMn}, denote by $\delta(t+iv)$ the distance between the circle $\cC(-(t+iv), |e^{t+iv}-1| \rho(t+iv)))$ and the set of the poles of $h$,
\begin{equation*} 
\delta(t+iv)= d ( \cC(-(t+iv), |e^{t+iv}-1|\rho(t+iv)) , 2i\pi \dZ^*)
\end{equation*}
It follows from \eqref{CHOICErho_t} and the Pythagorean theorem that
\begin{equation} \label{DISTPOLECALCULComp}
   \delta(t+iv)= \sqrt{t^2+ (2\pi-|v|)^2} - \sqrt{t^2+  \pi^2}. 
\end{equation}
One can observe that  
$$
\delta(t+iv)=\frac{(3\pi -|v|)(\pi -|v|)}{\sqrt{t^2+ (2\pi-|v|)^2} + \sqrt{t^2+  \pi^2}}
$$
which leads to
\begin{equation}
\label{DISTPOLEESTIMATEC}
 \frac{\pi(\pi-|v|)}{\sqrt{t^2+4\pi^2}}< \delta(t+iv)< \pi-| v |.
\end{equation}
 {Using \eqref{CAUCHYMNC} together with \eqref{ESThnotex} and \eqref{DISTPOLEESTIMATEC}, we obtain that}
$$
|m_n^\cH (t+iv)| \leq  \!\Big( 1 + \frac{1}{\pi} + \frac{1}{\delta(t+iv)} \Big) \frac{1}{\rho(t+iv)^n} \leq \!\Big( 1 + \frac{1}{\pi} + \frac{\sqrt{t^2+4\pi^2}}{\pi(\pi-|v|)} \Big) \frac{1}{\rho(t+iv)^n}.
$$
Hence, we find that
\begin{align*}
    m_n(t+iv)& =(1-e^{-(t+iv)})m_n^{\mathcal{G}}(t+iv)(1+ r_n(t+iv)) 
\end{align*}
where the remainder term $r_n(t)$ is the ratio
\begin{equation*}
    r_n(t+iv) = \frac{m_n^{\cH}(t+iv)}{m_n^{\cG}(t+iv)}
\end{equation*}
that satisfies
\begin{equation}
\label{PRCRemainder}
| r_n(t+iv)| \leq  \sqrt{t^2+v^2} \Big( 1 + \frac{1}{\pi} + \frac{\sqrt{t^2+4\pi^2}}{\pi(\pi-|v|)} \Big)\Big( \frac{t^2+v^2}{t^2+\pi^2}\Big)^{n/2}.
\end{equation}
Hereafter, we go further in the analyses of $m_n(t+iv)$ by providing a different upper bound for $m_n^\cH(t+iv)$. Our motivation is that factor 
$$\frac{1}{\pi -|v|}$$ 
in \eqref{PRCRemainder} becomes very large
when $|v|$ is close to $\pi$. Our strategy is not to obtain the best exponent by taking the largest radius, close to the radius of convergence. Instead, we shall consider a smaller radius in order to stay away from the poles, but  not to small in order to still have an exponential term with respect to $m_n^\cF(t)$.
Let $\beta$ be the function defined, or all $|v| < \pi$, by
$$
\beta (v) = \frac{2(1- \cos(v))}{v^2}.
$$
It is clear that $\beta$ is even function, increasing on $[-\pi,0]$ and  decreasing on $[0,\pi]$ with a maximum value $\beta(0)=1$ and such that
$\beta(\pi)=4/\pi^2$. We shall replace the radius, previously given by \eqref{CHOICErho_t}, by the new radius
\begin{equation}\label{CHOICErho_t2}
 \rho(t+iv) = \frac{\sqrt{t^2 +\beta(v) v^2}}{|e^{t+iv}-1|}.
\end{equation}
On can observe that we only replace $\pi^2$ by $\beta(v) v^2=2(1-\cos(v))$. As before,
denote by $\delta(t+iv)$ the distance between the circle $\cC(-(t+iv), |e^{t+iv}-1| \rho(t+iv)))$ and the set of the poles of $h$,
\begin{equation} \label{DISTPOLECALCULComp2}
  \delta(t+iv)= \sqrt{t^2+ (2\pi -|v|)^2} - \sqrt{t^2+  \beta(v) v^2 }. 
\end{equation}
As in the proof of \eqref{DISTPOLEESTIMATEC}, we obtain that
\begin{equation}
\label{DISTPOLEESTIMATEC2}
 \frac{\pi^2 -4 } {2\sqrt{t^2+\pi^2}}< \delta(t+iv)< 2\pi-\Big(1+ \frac{2}{\pi}\Big)| v |
\end{equation}
which ensures that
\begin{eqnarray*}
|m_n^\cH (t+iv)| &\leq & \Big( 1 + \frac{1}{\pi} + \frac{1}{\delta(t+iv)} \Big) \frac{1}{\rho(t+iv)^n}, \\
&\leq & \Big( 1 + \frac{1}{\pi} + \frac{2\sqrt{t^2+\pi^2}}{\pi^2-4} \Big) \Big( \frac{|e^{t+iv}-1|^2} {t^2 + \beta(v) v^2}\Big)^{n/2}.
\end{eqnarray*}
Hereafter, it follows from straightforward calculation that
\begin{eqnarray*}
     \frac{|e^{t+iv}-1|^2} {t^2 + \beta(v) v^2} &=& \frac{(e^t-1)^2 + 2 e^t (1-\cos(v))  }{t^2+ \beta(v) v^2}=\frac{(e^t-1)^2 +  e^t  \beta(v) v^2  }{t^2+ \beta(v) v^2},\\
     &=& \frac{(e^t-1)^2}{t^2}   \left(\frac{t^2}{t^2+ \beta(v) v^2} + \frac{t^2 e^t \beta(v) v^2}{ (e^t-1)^2 (t^2+\beta(v) v^2) } \right),\\
   &=&  \frac{(e^t-1)^2}{t^2} \left(  1 - \Big(\frac{(e^t-1)^2 - t^2e^t }{(e^t-1)^2} \Big)  \frac{\beta(v) v^2}{t^2+\beta(v) v^2} \right).
\end{eqnarray*}
Moreover, we also have from \eqref{DEFL} that for all $t \neq 0$,
\begin{equation}
\label{DER2L}
L''(t) = \frac{(e^t-1)^2 -t^2 e^t}{(t(e^t-1))^2}.
\end{equation}
In addition, by using that for $|v| \leq \pi$, we have 
$$\frac{\beta(v)v^2}{t^2+\beta(v) v^2} \geq \frac{4v^2}{\pi^2(t^2+4)}.$$
Therefore, we deduce from the elementary inequality $1-x \leq \exp(-x)$ that
\begin{equation}
\label{ESTIMATION:mnH3}
    |m_n^\cH(t+iv)| \leq  \Big(1+\frac1\pi+\frac{ 2\sqrt{t^2+\pi^2} }{\pi^2 -4} \Big) \Big(\frac{e^t-1}{t}\Big)^n \exp \Big(\!\!-n \frac{4 t^2L''(t)}{\pi^2(t^2+4)} \frac{v^2}{2} \Big).
\end{equation}
We also recall that 
$$
m_n^{\cG} (t+iv)=\frac{1}{t+iv}  \Big(\frac{e^{t+iv}-1}{t+iv}\Big)^n.
$$
We also have from straightforward calculation that
\begin{eqnarray}
\frac{|e^{t+iv}-1|^2}{|t+iv|^2}&=& \frac{(e^t-1)^2 +  2e^t(1-\cos(v)) }{t^2+  v^2}=\frac{(e^t-1)^2 +  e^t \beta(v) v^2 }{t^2+  v^2} \notag\\
  &=& \frac{(e^t-1)^2}{t^2}   \left(\frac{t^2}{t^2+  v^2} + \frac{t^2 e^t \beta(v) v^2}{(e^t-1)^2 (t^2+ v^2) } \right) \label{MAJORATION-v-grand}\\
  &=&\frac{(e^t-1)^2}{t^2} \left(  1 - \left(\frac{(e^t-1)^2  - t^2e^t\beta(v)}{(e^t-1)^2}\right) \frac{v^2}{t^2+ v^2 } \right) \notag\\
  &=&\frac{(e^t-1)^2}{t^2} \left(  1 - \left(\frac{(e^t-1)^2  - t^2e^t}{(e^t-1)^2} \right) \frac{ v^2 }{t^2+ v^2 } - \frac{ t^2e^t}{(e^t-1)^2} \frac{ (1-\beta(v)) v^2 }{t^2+ v^2 } \right) \notag\\
 &\leq& \frac{(e^t-1)^2}{t^2} \left(  1 - \left(\frac{(e^t-1)^2  - t^2e^t}{(e^t-1)^2} \right) \frac{ v^2 }{t^2+ \pi^2 }  \right)
 \label{MAJORATION}
\end{eqnarray}
since $\beta(v) \leq 1$. Hence, we obtain from \eqref{DER2L} that
\begin{equation}
\label{ESTIMATION:mnG3}
    |m_n^\cG(t+iv)| \leq  \frac{1}{\sqrt{t^2+ v^2}} \Big(\frac{e^t-1}{t}\Big)^n \exp \Big(\!\!-n \frac{t^2 L''(t)}{t^2+\pi^2} \frac{v^2}{2} \Big).
\end{equation}
Finally, we already saw that
$$ 
m_n(t+i v) = (1-e^{-(t+iv)}) (m_n^{\mathcal{G}}(t+iv)+ m_n^{\mathcal{H}}(t+iv)).
$$
Consequently, \eqref{ESTIMATION:mnH3} together with \eqref{ESTIMATION:mnG3} clearly lead to \eqref{UBComplexRemainder2}, which achieves the proof of Lemma \ref{LDEVMncompl}.
\end{proof}





\section{Proof of the sharp large deviation principle.}
\label{S-SLDP}



Let us start by an elementary lemma concerning the asymptotic cumulant generating function $L$ defined by \eqref{DEFL}.

\begin{lem}\label{LEM-pte-L}
The function $L :\dR \to \dR$ is twice differentiable and strictly convex function and its first derivative $L':\dR \to ]0,1[$  is a bijection. In particular, for each $x\in ]0,1[$, there exists a unique value $t_x\in \dR$ such that 
\begin{equation}
\label{RATEFI}
I(x)= x t_x - L(t_x)
\end{equation}
where $I$ is the Fenchel-Legendre transform of $L$. The value $t_x$ is also
characterized by the relation $L'(t_x) =x$ where, for all $t \neq 0$,
\begin{equation}
\label{LFIRSTDER}
 L'(t) = \frac{e^t(t-1)+1}{t(e^t-1)}. 
\end{equation}
Moreover, for all $x \in ]1/2,1[$, $t_x >0$ while for all $x \in ]0, 1/2[$, $t_x<0$.
In addition, for all $t \in \dR$, $L''(t)>0$ as the second derivative of $L$ is given, for all $t \neq 0$, by
\begin{equation}
\label{LSECONDDER}
L''(t) = \frac{(e^t-1)^2 -t^2 e^t}{(t(e^t-1))^2}.
\end{equation}
Finally, the function $L$ can be extended to a  function $L:\dC \setminus 2i \pi \dZ^* \to \dC$ satisfying for all
$v \in \dR$ such that $|v|\leq \pi$, 
\begin{equation}
\label{COMPLEXL}
\Re L(t+iv) \leq  L(t) - C(t) \frac{v^2}{2}
\end{equation}
where 
$$ 
C(t)= \frac{t^2}{t^2+\pi^2} L''(t).
$$
\end{lem}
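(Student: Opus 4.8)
The plan is to treat the real-variable statements by a direct computation, which becomes transparent once one observes that $L$ is a genuine cumulant generating function, and then to deduce the complex bound \eqref{COMPLEXL} as an almost immediate consequence of the estimate already obtained inside the proof of Lemma~\ref{LDEVMncompl}. For the first part, set $M(t)=(e^t-1)/t$ for $t\neq 0$ and $M(0)=1$, so that $M(t)=\int_0^1 e^{tu}\dd u=\dE[e^{tU}]$ where $U$ is uniformly distributed on $[0,1]$. Then $M>0$ is smooth on $\dR$ and $L=\log M$ is the cumulant generating function of $U$; in particular $L$ is smooth with $L'(t)=\dE_t[U]$ and $L''(t)=\mathrm{Var}_t(U)>0$, where $\dE_t$ denotes expectation under the probability on $[0,1]$ with density proportional to $e^{tu}$. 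This gives $L''>0$ and strict convexity. The explicit formulas \eqref{LFIRSTDER} and \eqref{LSECONDDER} follow by differentiating $L(t)=\log(e^t-1)-\log t$ directly, starting from $L'(t)=\frac{e^t}{e^t-1}-\frac1t$. (A probability-free check of $L''>0$ is that \eqref{LSECONDDER} is equivalent to $(e^{t/2}-e^{-t/2})^2>t^2$, that is $\sinh^2(t/2)>(t/2)^2$ for $t\neq 0$, which is the classical inequality $\sinh^2 u>u^2$ for $u\neq 0$.)

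Next, since $L''>0$ the derivative $L'$ is continuous and strictly increasing on $\dR$, and from \eqref{LFIRSTDER} (equivalently, from $L'(t)=\dE_t[U]$ together with the concentration of the tilted law at the endpoints of $[0,1]$) one gets $L'(t)\to 1$ as $t\to+\infty$ and $L'(t)\to 0$ as $t\to-\infty$; hence $L':\dR\to\,]0,1[$ is a bijection. For $x\in\,]0,1[$ the map $t\mapsto xt-L(t)$ is strictly concave with derivative $x-L'(t)$ vanishing exactly at $t_x:=(L')^{-1}(x)$, so the supremum defining $I(x)$ is attained there, which yields \eqref{RATEFI} and the characterisation $L'(t_x)=x$. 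Finally $L'(0)=\dE[U]=1/2$, so by strict monotonicity of $L'$ we have $t_x>0$ for $x\in\,]1/2,1[$ and $t_x<0$ for $x\in\,]0,1/2[$.

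For the complex estimate, note that $z\mapsto(e^z-1)/z$ (equal to $1$ at $z=0$) is entire and vanishes only on $2i\pi\dZ^*$, so $\Re L(z)=\log|(e^z-1)/z|$ is a well-defined smooth function on $\dC\setminus 2i\pi\dZ^*$; only this real part enters \eqref{COMPLEXL}. For $t\in\dR$ and $|v|\leq\pi$,
$$
\Re L(t+iv)=\tfrac12\log\frac{|e^{t+iv}-1|^2}{t^2+v^2},\qquad L(t)=\tfrac12\log\frac{(e^t-1)^2}{t^2},
$$
so \eqref{COMPLEXL} is equivalent, after exponentiation, to $\frac{|e^{t+iv}-1|^2}{t^2+v^2}\leq\frac{(e^t-1)^2}{t^2}\,e^{-C(t)v^2}$. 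When $t\neq 0$, this is precisely inequality \eqref{MAJORATION} from the proof of Lemma~\ref{LDEVMncompl}, namely $\frac{|e^{t+iv}-1|^2}{t^2+v^2}\leq\frac{(e^t-1)^2}{t^2}\bigl(1-\frac{(e^t-1)^2-t^2e^t}{(e^t-1)^2}\frac{v^2}{t^2+\pi^2}\bigr)$, combined with the identity $\frac{(e^t-1)^2-t^2e^t}{(e^t-1)^2}=t^2L''(t)$ (coming from \eqref{LSECONDDER}, so that the bracket coefficient becomes $C(t)=\frac{t^2}{t^2+\pi^2}L''(t)$) and the elementary bound $1-s\leq e^{-s}$; taking $\tfrac12\log$ of both sides gives \eqref{COMPLEXL}. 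The remaining case $t=0$, where $C(0)=0$, is the elementary inequality $\Re L(iv)=\log\frac{2|\sin(v/2)|}{|v|}\leq 0=L(0)$, i.e. $|\sin u|\leq|u|$.

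There is no real obstacle in this lemma: the real-variable assertions are routine calculus once $L$ is identified with a cumulant generating function, and \eqref{COMPLEXL} has in effect been established inside the proof of Lemma~\ref{LDEVMncompl}. The only steps that require any care are recording the algebraic identity that turns the bracket coefficient in \eqref{MAJORATION} into $C(t)$, applying $1-s\le e^{-s}$, and dealing with the endpoint $t=0$ separately; and one should note that, although $\log$ is multivalued, the quantity $\Re L=\log|\cdot|$ is globally single-valued on $\dC\setminus 2i\pi\dZ^*$, so no branch issue arises in \eqref{COMPLEXL}.
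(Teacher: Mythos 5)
Your proof is correct and matches the paper's argument: for the complex bound \eqref{COMPLEXL} you invoke exactly the inequality \eqref{MAJORATION} together with the identity $t^2L''(t)=\bigl((e^t-1)^2-t^2e^t\bigr)/(e^t-1)^2$ and the bound $1-s\leq e^{-s}$ (equivalently $\log(1-s)\leq -s$), which is precisely what the paper does. Your probabilistic framing of the real-variable assertions (recognizing $L$ as the cumulant generating function of a $\mathrm{Uniform}[0,1]$ variable, so $L''(t)=\mathrm{Var}_t(U)>0$) is a cleaner way to package what the paper dismisses as ``straightforward calculation,'' and your explicit treatment of the endpoint $t=0$ in \eqref{COMPLEXL} fills a detail the paper glosses over by restricting to $t\neq 0$.
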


\begin{proof} We already saw in the previous Section that the calculation of the first two derivatives \eqref{LFIRSTDER} and \eqref{LSECONDDER} of $L$
follows from straightforward calculation. Let us remark that 
$$ \lim_{t \to 0} L'(t)=\frac{1}{2} \hspace{1cm} \text{and}  \hspace{1cm} \lim_{t \to 0} L''(t)=\frac{1}{12},$$
which means that $L$ can be extended as a $C^2$ function on $\dR$.
The above computation also gives that 
$$\lim_{t\to-\infty} L'(t)=0\hspace{1cm} \text{and}  \hspace{1cm} \lim_{t\to+\infty} L'(t)=1.$$ 
We now focus our attention on the complex extension of $L$. We deduce from \eqref{MAJORATION} and \eqref{LSECONDDER} together with the elementary
inequality $\ln(1-x) \leq -x$ that for all $t\neq 0$ and $|v|\leq \pi$,
\begin{eqnarray*}
\Re (L(t+iv))&=& \ln \Big(  \frac{|e^{t+i v} -1 |}{|t+iv|}\Big),\\ 
&\leq& L(t) +  \frac{1}{2} \ln \Big(  1 - t^2 L''(t)  \frac{v^2}{t^2+\pi^2} \Big), \\
&\leq& L(t)-   C(t) \frac{v^2}{2},
\end{eqnarray*}
which completes the proof of Lemma \ref{LEM-pte-L}.
\end{proof}

\noindent
We carry out with an elementary lemma which can be seen as a slight extension of the usual Laplace method.

\begin{lem}
\label{LEMLaplace++}
Let us consider two real numbers $a< 0 <b$, and two functions $f:[a,b]\rightarrow \dC$ and
$\varphi:[a,b] \rightarrow \dC$ such that, for all $\lambda$ large enough,
$$\int_{a}^{b} e^{-\lambda \Re \varphi (u)} |f(u)| du < +\infty.$$
Assume that $f$ is a continuous function in $0$, $f(0)\neq 0$, $\varphi$ is a $C^2$ function in $0$, $\varphi'(0)=0$, $\varphi''(0)$ is a real positive number and there exists a constant $C>0$ such that $\Re \varphi(u) \geq \Re \varphi(0) + Cu^2$. Then, 
we have
\begin{equation}
\label{LAPLACE}
\lim_{\lambda \rightarrow \infty} \sqrt{\lambda} e^{\lambda\varphi(0)}\int_{a}^{b} e^{-\lambda \varphi (u)} f(u) du 
=\sqrt{2\pi}\frac{f(0)}{\sqrt{\varphi''(0)}}.
\end{equation}
\end{lem}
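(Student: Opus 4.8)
The plan is to reduce the integral to a Gaussian integral by a change of variables, exploiting only the local behavior of $\varphi$ and $f$ at $0$ together with the global coercivity bound $\Re\varphi(u)\geq \Re\varphi(0)+Cu^2$. Without loss of generality we may assume $\varphi(0)=0$ (replace $\varphi$ by $\varphi-\varphi(0)$, which changes nothing since the prefactor $e^{\lambda\varphi(0)}$ cancels it). First I would write, for a small fixed $\eta>0$ to be chosen later,
\[
\sqrt{\lambda}\int_a^b e^{-\lambda\varphi(u)}f(u)\dd u = \sqrt{\lambda}\int_{-\eta}^{\eta} e^{-\lambda\varphi(u)}f(u)\dd u + \sqrt{\lambda}\int_{[a,b]\setminus[-\eta,\eta]} e^{-\lambda\varphi(u)}f(u)\dd u,
\]
and control the tail term: on $[a,b]\setminus[-\eta,\eta]$ we have $\Re\varphi(u)\geq C\eta^2$, so for $\lambda$ large enough the tail is bounded in modulus by
\[
\sqrt{\lambda}\, e^{-\lambda C\eta^2/2}\int_a^b e^{-(\lambda/2)\Re\varphi(u)}|f(u)|\dd u,
\]
which tends to $0$ as $\lambda\to\infty$ (the remaining integral being finite by hypothesis for $\lambda/2$ large). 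Thus only the central piece matters.

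For the central piece, since $\varphi$ is $C^2$ at $0$ with $\varphi(0)=0$, $\varphi'(0)=0$ and $\varphi''(0)>0$ real, a second-order Taylor expansion gives $\varphi(u)=\tfrac12\varphi''(0)u^2 + u^2\psi(u)$ with $\psi$ continuous and $\psi(0)=0$; hence $\varphi(u)=\tfrac12\varphi''(0)u^2(1+\theta(u))$ with $\theta$ continuous and $\theta(0)=0$. Choosing $\eta$ small enough that $|\theta(u)|\leq 1/2$ on $[-\eta,\eta]$, the substitution $w=u\sqrt{\lambda}$ yields
\[
\sqrt{\lambda}\int_{-\eta}^{\eta}e^{-\lambda\varphi(u)}f(u)\dd u = \int_{-\eta\sqrt{\lambda}}^{\eta\sqrt{\lambda}} \exp\!\Big(-\tfrac12\varphi''(0)w^2\big(1+\theta(w/\sqrt{\lambda})\big)\Big) f\!\big(w/\sqrt{\lambda}\big)\dd w.
\]
As $\lambda\to\infty$ the integrand converges pointwise to $f(0)\exp(-\tfrac12\varphi''(0)w^2)$, using continuity of $f$ and $\theta$ at $0$. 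To pass to the limit I would invoke dominated convergence: on the domain of integration $\Re\big(\varphi''(0)w^2(1+\theta(w/\sqrt\lambda))\big)\geq \tfrac12\varphi''(0)w^2$ (here $\varphi''(0)$ is real positive and $|\theta|\leq 1/2$, so the real part of $1+\theta$ is $\geq 1/2$), giving the integrable dominating function $\big(\sup_{[-\eta,\eta]}|f|\big)\exp(-\tfrac14\varphi''(0)w^2)$. Therefore the central piece converges to $f(0)\int_{\dR}\exp(-\tfrac12\varphi''(0)w^2)\dd w = \sqrt{2\pi}\,f(0)/\sqrt{\varphi''(0)}$, which is exactly \eqref{LAPLACE}.

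The main subtlety — rather than a genuine obstacle — is bookkeeping the \emph{complex} nature of $\varphi$ and $f$: one must keep track of real parts throughout (in the tail estimate, in the coercivity used for domination, and in checking $\mathrm{Re}(1+\theta)\ge 1/2$), and the hypothesis $\int_a^b e^{-\lambda\Re\varphi}|f|<\infty$ for large $\lambda$ is precisely what licenses both the finiteness of the tail integral and, after halving $\lambda$, its decay. Everything else is the standard Laplace-method argument: localize, rescale, dominated convergence, Gaussian integral.
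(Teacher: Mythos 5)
Your argument is correct, and it is essentially the same Laplace--method proof as the paper's, differing only in how the error is packaged. The paper splits the \emph{integrand} as $f=f(0)+(f-f(0))$, keeps the whole interval $[a,b]$, rescales $u\mapsto u/\sqrt{\lambda}$, dominates via the global coercivity $\Re\varphi(u)\ge Cu^2$ (giving the majorant $e^{-Cu^2}$ directly, no local Taylor factorisation needed), and then disposes of the $f-f(0)$ piece by the ``usual Laplace method.'' You instead split the \emph{domain} $[a,b]=[-\eta,\eta]\cup([a,b]\setminus[-\eta,\eta])$, kill the tail with the coercivity bound plus the finiteness hypothesis at parameter $\lambda/2$, and handle the central piece by rescaling and dominated convergence with $f$ left inside, using the local factorisation $\varphi(u)=\tfrac12\varphi''(0)u^2(1+\theta(u))$ to produce a majorant. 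Both routes work; yours is perhaps a touch more self-contained (it does not defer to ``the usual Laplace method''), while the paper's is marginally slicker in that the global bound $\Re\varphi\ge Cu^2$ makes the local Taylor factorisation with $|\theta|\le 1/2$ unnecessary---you could have dominated by $e^{-Cw^2}$ directly. One small point worth making explicit in your write-up: the boundedness of $f$ on $[-\eta,\eta]$ that you use in the dominating function is not assumed globally, only continuity of $f$ at $0$; it does hold for $\eta$ small enough, but say so.
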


\begin{proof}
First of all, we can assume without loss of generality that $\varphi(0)=0$. On can observe that for all $\lambda$ large enough,
\begin{equation}
\label{INEG-LAPLACE++1}
    \int_{a}^{b} e^{-\lambda \varphi (u)} f(0) du = \frac{f(0)}{\sqrt{\lambda}}\int_{ a\sqrt{\lambda}}^{b\sqrt{\lambda}} 
    \exp\Big(\!\!-\lambda \varphi\Big(\frac{u}{\sqrt{\lambda}}\Big)\Big)du.
\end{equation}
However, it follows from the assumptions on the function $\varphi$ that
$$\lim_{\lambda \to +\infty} \lambda  \varphi \Big(\frac{u}{\sqrt{\lambda}}\Big) = \varphi''(0)\frac{u^2}{2},$$ 
together with
$$\Big|\exp\Big(\!\!-\lambda \varphi\Big(\frac{u}{\sqrt{\lambda}}\Big)\Big) \Big| \leq \exp(-Cu^2).$$
Consequently, according to the dominated convergence theorem, we obtain that
\begin{equation}
\label{INEG-LAPLACE++2}
\lim_{\lambda \to +\infty} \int_{a\sqrt{\lambda}}^{b\sqrt{\lambda}}  \exp\Big(\!\!-\lambda \varphi\Big(\frac{u}{\sqrt{\lambda}}\Big)\Big) du =\int_{-\infty}^{+\infty} \exp\Big(\! -\varphi''(0)\frac{u^2}{2}\Big)du = \frac{\sqrt{2\pi}}{\sqrt{\varphi''(0)}}.
\end{equation}
Furthermore, by using the usual Laplace method, we find that
\begin{equation}
\label{INEG-LAPLACE++3}
    \Big| \int_{a}^{b} e^{-\lambda \varphi (u)} (f(u)-f(0)) du \Big| \leq 
    \int_{a}^{b} e^{-\lambda \Re \varphi (u)} |f(u)-f(0)| du = o(\lambda^{-1/2}).
\end{equation}
Finally, \eqref{INEG-LAPLACE++1}, \eqref{INEG-LAPLACE++2} and \eqref{INEG-LAPLACE++3} allow to conclude the proof of Lemma \ref{LEMLaplace++}.
\end{proof}

\begin{proof}[Proof of Theorem \ref{T-SLDP}]
We shall now proceed to the proof of Theorem \ref{T-SLDP}. Our goal is to estimate, for all
$x \in ]1/2,1[$, the probability
\begin{eqnarray*}
\dP\Big(\frac{D_n}{n} \geq x \Big) = \sum_{k=\lceil nx \rceil }^{n-1} \dP\left(D_n =k\right).
\end{eqnarray*}
To do so, we extend the Laplace transform $m_n$ of $D_n$, defined in \eqref{Def-Laplace}, into an analytic function on the complex plane. 
For all $t,v\in \dR$, we  have
$$
m_n(t+iv) = \dE\Big[e^{(t+iv) D_n} \Big]= \sum_{k=0}^{n-1} e^{(t+iv)k} \dP(D_n =k).
$$
Therefore, for all $t,v\in \dR$ and for all $k \geq 0$, 
\begin{equation}\label{COEFF-FOURIER}
    \dP\left(D_n =k\right) =e^{-tk}  \frac{1}{2\pi} \int_{-\pi}^{\pi} m_n(t+iv)e^{-ik v}dv.
\end{equation}
One can observe that \eqref{COEFF-FOURIER} is also true for $k \geq n$ and allows us to recover that $\dP(D_n=k)=0$.
Consequently,  {since $|m_n(t+iv)| \leq e^{t n}$,} it follows from Fubini's theorem that for all $t>0$,
\begin{equation}\label{PROBAGEQX}
   \dP\Big(\frac{D_n}{n} \geq x \Big) =    \frac{1}{2\pi} \int_{-\pi}^{\pi} m_n(t+iv)   \sum_{k = \lceil nx \rceil }^{+\infty}   e^{-k(t+i v)}dv.
\end{equation}
In all the sequel, we choose $t=t_x$. In particular, $t_x > 0$ since $x> 1/2$. Then, we deduce from \eqref{PROBAGEQX} that
\begin{equation}
\label{DecIn}
    \dP\Big(\frac{D_n}{n} \geq x \Big) = \frac{1}{2\pi} \int_{-\pi}^{\pi} m_n(t_x+iv)  \frac{\exp( - t_x \lceil nx \rceil  -  i \lceil nx \rceil v) }{1-     e^{-(t_x+i v)}}dv = I_n
\end{equation}
where the integral $I_n$ can be separated into two parts, $I_n=J_n+K_n$ with
\begin{eqnarray*}
J_n & = &  \frac{1}{2\pi} \int_{|v|<\pi - \varepsilon_n} m_n(t_x+iv)  \frac{\exp\left( - t_x \lceil nx \rceil  -  i \lceil nx \rceil v \right) }{1- e^{-(t_x+i v)}}dv, \\ 
K_n & = & \frac{1}{2\pi} \int_{\pi - \varepsilon_n <|v| < \pi } m_n(t_x+iv)  \frac{\exp\left( - t_x \lceil nx \rceil  -  i \lceil nx \rceil v \right) }{1-     e^{-(t_x+i v)}}dv,
\end{eqnarray*}
where $\veps_n= n^{-3/4}$. On the one hand, we obtain from \eqref{MNT} that
\begin{eqnarray*}
    |K_n| &\leq& \frac{1}{2\pi} \int_{\pi - \varepsilon_n <|v| < \pi } |m_n(t_x)|  \frac{\exp\left( - t_x \lceil nx \rceil  \right) }{|1-e^{-t_x}|    }dv,\\
    & \leq &\frac{2\veps_n}{2\pi t_x} \exp(-t_x(\lceil nx \rceil -nx)) \exp(-nI(x))(1+|r_n(t_x)|),\\
    & \leq & \frac{\veps_n}{\pi t_x} \exp(-nI(x))(1+|r_n(t_x)|)
\end{eqnarray*}
by using \eqref{MNT}, \eqref{RATEFI} and the fact that $t_x>0$. Consequently, \eqref{UBRemainder} together
with the definition of $\veps_n$ ensure that
\begin{equation}
\label{Kn}
    K_n =o\Big(\frac{\exp(-nI(x))}{\sqrt{n}}\Big).
\end{equation}
It only remains to evaluate the integral $J_n$. We deduce from \eqref{MNTcompl}
and \eqref{RATEFI} that
\begin{equation} 
    J_n =  \frac{1}{2\pi} \exp( -t_x \{nx \}- n I(x))
           \int_{|v|<\pi - \veps_n} \exp\left( - n \varphi(v) \right)   g_n (v) dv 
\label{Jn}
\end{equation}
where the functions $\varphi$ and $g$ are given, for all $|v| < \pi$, by
$$
\varphi(v) = - \left(L(t_x+ iv) - L(t_x) -i xv  \right)
$$
and
$$
g_n (v)= \frac{ \exp(-i \{nx\}v)} {t_x+i v}  (1 + r_n(t_x+iv)).
$$
Thanks to Lemma \ref{LEM-pte-L}, we have $\varphi(0)=0$, $\varphi'(0)=0$, $\varphi''(0)=\sigma_x^2$ which is a positive real number. 
In addition, there exists a constant $C>0$ such that $\Re \varphi(v) \geq C v^2$.
Therefore, via the extended Laplace method given by Lemma \ref{LEMLaplace++}, we obtain that
\begin{equation}
\label{LaplaceEasy}
\lim_{n \rightarrow \infty} \sqrt{n} \int_{-\pi}^{\pi}  \exp( - n \varphi(v) ) \frac{1}{t_x+iv} dv
=\frac{\sqrt{2\pi}}{ \sigma_x t_x}.
\end{equation}
Hereafter, it follows from \eqref{UBComplexRemainder} that there exists positive constant $a_x, b_x, c_x$ such that for all $|v|\leq \pi-\veps_n$,
\begin{equation*}
|r_n(t_x+iv)|\leq \frac{a_x}{\veps_n }  \Big( 1 - b_x \veps_n\Big) ^{n/2} \leq  \frac{a_x}{\veps_n } \exp \Big( - \frac{b_x n \veps_n}  {2}\Big) 
\leq c_x \exp\Big( - \frac{b_x}{4} n^{1/4} \Big),
\end{equation*}
which ensures that
$$
\big|\exp(-i \{nx\}v)  (1 + r_n(t_x+iv))-1\big|
\leq c_x\exp\Big( - \frac{b_x}{4} n^{1/4} \Big) +|v|.
$$
Then, we obtain that
$$
\left|\int_{-\pi}^{\pi} \exp\left( - n \varphi(v) \right)  \Big( g_n (v)\indicatrice_{|v| <\pi - \veps_n} - \frac{1}{t_x+iv} \Big)dv \right| 
\leq \Lambda_n
$$ 
where 
$$
\Lambda_n=\int_{-\pi}^{\pi} \exp( - n \Re \varphi(v))  \frac{1}{|t_x+iv|}  \Big( c_x\exp\Big( - \frac{b_x}{4} n^{1/4} \Big) +2|v|\Big) dv 
$$
since $\indicatrice_{|v|\geq\pi -\veps_n} \leq |v|$. By the standard Laplace method,
$$
\lim_{n \rightarrow \infty} \sqrt{n} \Lambda_n=0.
$$
Consequently, we deduce from \eqref{Jn} and \eqref{LaplaceEasy} that
\begin{equation}
\label{LimJn}
\lim_{n \rightarrow \infty} \sqrt{n} \exp( t_x \{nx \} + n I(x))J_n
=\frac{1}{\sigma_x t_x\sqrt{2\pi}}.
\end{equation}
Finally, \eqref{DecIn} together with \eqref{Kn} and \eqref{LimJn} clearly lead to \eqref{SLDPR}.
\end{proof}

\section{An alternative proof.}
\label{S-PRU}
We already saw from \eqref{Tanny} that the distribution $D_n$ is nothing else than the one of the integer part of the Irwin-Hall distribution
which is the sum $S_n=U_1+\cdots+U_n$ of independent and identically distributed random variables sharing the same uniform distribution on $[0,1]$.
It follows from some direct calculation that for any $x\in ]1/2, 1[$,
\begin{eqnarray}
    \dP\Big(\frac{D_n}{n}\geq x\Big) &=& \dP( S_n \geq \lceil nx \rceil), \nonumber\\
     &=&  \dE_n \Big[ \exp(- t_x S_n + nL(t_x)) \ind_{\{\frac{S_n}{n} \geq x +\veps_n \}} \Big ], \nonumber\\
    &=&\exp(-nI(x) ) \dE_n \Big[ e^{-nt_x \left(\frac{S_n}{n} -x \right)}\ind_{\{\frac{S_n}{n} \geq x+\veps_n \}} \Big ],
    \label{change-proba}
\end{eqnarray}
where $\dE_n$ is the expectation under the new probability $\dP_n$ given by
\begin{equation}
\label{NewP}
 \frac{d\dP_n}{d\dP}= \exp\big(t_x S_n -n L(t_x)\big)
\end{equation}
and $\veps_n= \{nx\}/n$. Let
$$V_n= \frac{\sqrt{n}}{\sigma_x}  \Big( \frac{S_n}{n}-x \Big)$$
and denote $f_n$ and $\Phi_n$ the probability density function and the 
characteristic function of $V_n$ under the new probability $\dP_n$, respectively. Let us remark that, under $\dP_n$, we know that $(V_n)$ converges in distribution to the standard Gaussian measure. Using Parseval identity, we have 
\begin{eqnarray}
    \nonumber\dE_n \Big[ e^{-nt_x \left(\frac{S_n}{n} -x \right)}\ind_{\{\frac{S_n}{n} \geq x+\veps_n \}} \Big ]
     \nonumber&=& \int_{\dR} e^{-\sigma_x t_x \sqrt{n} v} \ind_{\{ v \geq \frac{\sqrt n \veps_n}{\sigma_x}\} }
         f_{n}(v) dv,\\
          \nonumber &=& \frac{1}{2\pi} \int_{\dR} \frac{e^{-(\sigma_x t_x\sqrt n+iv) \frac{\sqrt n \veps_n}{\sigma_x} }} {\sigma_x t_x \sqrt n+iv} \Phi_n(v) dv,\\
          &=& \frac{e^{-t_x\{nx\}}}{2\pi\sigma_x \sqrt{n}} \int_{\dR} \frac{e^{-i \frac{ \{nx\} v}{\sigma_x \sqrt n} }} {t_x+i\frac{v}{\sigma_x \sqrt n}} \Phi_n(v) dv.
          \label{Int-Parseval}
\end{eqnarray}
Recalling that $L$ is also the logarithm of the Laplace transform of the uniform distribution in $[0,1]$, we
obtain from \eqref{NewP} that
 \begin{eqnarray*}
 \Phi_n(v)&=&\dE \Big[ \exp\Big( \frac{iv S_n}{\sigma_x \sqrt n} - \frac{i\sqrt n x v}{\sigma_x} +t_x S_n -nL(t_x)\Big) \Big],\\
          &=&\exp\Big( n \Big( L\big(t_x+ \frac{iv}{\sigma_x \sqrt n}\big) -L(t_x)- \frac{i x v}{ \sqrt n\sigma_x} \Big) \Big).
 \end{eqnarray*}
 Let $A$ be a positive constant chosen later. We can split the integral in \eqref{Int-Parseval} into two parts: we call $J_n$ the integral on $[-A\sigma_x\sqrt{n}, +A\sigma_x\sqrt{n}]$ and $K_n$ the integral on the complementary set.
On the one hand,   {as \eqref{MAJORATION} also holds replacing $\pi^2$ by $v^2$ which is smaller than $A^2 \sigma_x^2 n$ and since $L''(t_x)=\sigma_x^2$,} we get that for all $v \in \dR$,
\begin{eqnarray}
    \left| \frac{e^{-i \frac{ \{nx\} v}{\sigma_x \sqrt n} }} {t_x+i\frac{v}{\sigma_x \sqrt n}} \Phi_n(v) \ind_{\{|v| \leq A \sigma_x\sqrt{n}\}} \right| &\leq&  \frac{1}{t_x} \left(1-\frac{t_x^2 \sigma_x^2}{t_x^2+A^2}\frac{v^2}{\sigma_x^2 n} \right)^{n/2}, \nonumber \\
    &\leq& \frac{1}{t_x} \exp \left( -\frac{t_x^2}{t_x^2+A^2}\frac{v^2}{2} \right).
    \label{majoration-domination}
\end{eqnarray}
Then, we deduce from the Lebesgue's dominated convergence theorem that
\begin{equation}
\label{limite-Jn}
    \lim_{n \rightarrow + \infty} J_n = \int_{\dR} \frac{1}{t_x} \exp \left( -\frac{L''(t_x)}{\sigma_x^2} \frac{v^2}{2} \right) dv = \frac{\sqrt{2\pi}}{t_x}. 
\end{equation}
 On the other hand, concerning $K_n$, since now $v$ is large in the integral, we use \eqref{MAJORATION-v-grand} to get that
$$
    \left| \frac{e^{-i \frac{ \{nx\} v}{\sigma_x \sqrt n} }} {t_x+i\frac{v}{\sigma_x \sqrt n}} \Phi_n(v)  \right|\leq \frac{1}{t_x} \left( \frac{t_x^2+4 t_x^2 \frac{e^{t_x}}{(e^{t_x}-1)^2}}{t_x^2+\frac{v^2}{\sigma_x^2 n}} \right)^{n/2}
$$
leading to
 \begin{equation}
     K_n \leq \frac{2}{t_x}\left( t_x^2+4 t_x^2 \frac{e^{t_x}}{(e^{t_x}-1)^2}  \right)^{n/2} \sigma_x \sqrt{n} \int_{A}^{+\infty}  \frac{1}{(t_x^2 +v^2)^{n/2}} dv.
     \label{borne-Kn}
 \end{equation}
 Moreover, for all $n>2$,
 \begin{equation}
     \int_{A}^{+\infty}  \frac{1}{(t_x^2 +v^2)^{n/2}} dv \leq \int_{A}^{+\infty}  \frac{v}{A}\frac{1}{(t_x^2 +v^2)^{n/2}} dv   = \frac{1}{(n-2)A} \frac{1}{(t_x^2+ A^2)^{n/2-1}}.
     \label{borne-integrale-Kn}
 \end{equation}
By taking 
$$A^2 = t_x^2+8 t_x^2  \frac{e^{t_x}}{(e^{t_x}-1)^2},$$ 
we obtain from \eqref{borne-Kn} and \eqref{borne-integrale-Kn} that 
\begin{equation}
\label{limite-Kn}
    \lim_{n \rightarrow + \infty} K_n = 0
\end{equation}
exponentially fast. Finally, \eqref{change-proba} together with \eqref{Int-Parseval}, \eqref{limite-Jn} and
\eqref{limite-Kn} allow us to conclude the alternative proof of Theorem \ref{T-SLDP}.
\demend
 
\begin{rem}
\label{rem-concentration-direct}
We can use previous computations to get also a new concentration inequality. More precisely, by using the upper-bound \eqref{majoration-domination} in order to get an upper bound for $J_n$ instead of a limit, we are able to prove that
\begin{equation}
\label{NEWCID}
    \dP\Big(\frac{D_n}{n} \geq x \Big) \leq  Q_n(x)\frac{\exp(-n I(x)-\{nx\}t_x)}{\sigma_x t_x \sqrt{2\pi n}}
\end{equation}
where the prefactor can be taken as
\begin{equation*}
    Q_n(x) =  \sqrt{2+\frac{8 e^{t_x}}{(e^{t_x}-1)^2}}+ \frac{4\sigma_x t_x}{\sqrt{2\pi}}  \sqrt{1+\frac{8e^{t_x}}{(e^{t_x}-1)^2}}\frac{\sqrt{n}}{2^{n/2}(n-2)} .
\end{equation*}
One can observe that \eqref{NEWCID} is similar to \eqref{CID}. 
Note that the constants in \eqref{CID} as well as in \eqref{NEWCID} are not sharp. 
It is in fact possible to improve them by  more precise cuttings in the integrals.
\end{rem}


\section{Proof of the concentration inequalities}
\label{S-CI}
We shall now proceed to the proof of the concentration inequalities.
Recalling that $x \in ]1/2,1[$ which implies that $t_x>0$, we obtain from equality \eqref{PROBAGEQX} that
\begin{align*}
   \dP\Big(\frac{D_n}{n} \geq x \Big)& =    \frac{1}{2\pi} \int_{-\pi}^{\pi} m_n(t_x+iv)   \sum_{k = \lceil nx \rceil }^{+\infty}   e^{-k(t_x+i v)}dv,\\
   &=
 \frac{1}{2\pi} \int_{-\pi}^{\pi} m_n(t_x+iv)   \frac{\exp\left( - t_x \lceil nx \rceil  -  i \lceil nx \rceil v \right) }{1-     e^{-(t_x+i v)}}dv, \\
 &\leq  \frac{ \exp(-\lceil n x\rceil t_x) }{2\pi} \int_{-\pi}^{\pi} \frac{|m_n(t_x+iv)|}{|1-     e^{-(t_x+i v)}|} dv.
\end{align*} 
Consequently, we deduce from the alternative upper-bound \eqref{UBComplexRemainder2} that
\begin{equation}
    \label{PRCI1}
    \dP\Big(\frac{D_n}{n} \geq x \Big) \leq \frac{1}{2\pi}  \exp\Big(\! - n (xt_x-L(t_x)) -\{nx\}t_x \Big) \big(A(x)+B(x)\big) 
\end{equation} 
where 
\begin{eqnarray*}
    A(x) &=& \int_{-\pi}^{\pi} \frac{1}{\sqrt{t_x^2+ v^2}} \exp \Big(-n \frac{t_x^2 L''(t_x)}{t_x^2+\pi^2} \frac{v^2}{2} \Big)dv, \\
    B(x) &=& \left(1+\frac1\pi+\frac{ 2\sqrt{t_x^2+\pi^2} }{\pi^2 -4} \right) \int_{-\pi}^{\pi} \exp \Big(\!\! -n \frac{4t_x^2 L''(t_x)}{\pi^2(t_x^2+4)} \frac{v^2}{2} \Big) dv.
\end{eqnarray*}
Hereafter, we recall from \eqref{DEFI} that $I(x)=x t_x -L(t_x)$ and we denote $\sigma_x^2=L''(t_x)$. It follows from standard Gaussian calculation that 
\begin{eqnarray}
\label{CIA}
    A(x) & \leq &  \frac{2 \pi}{ \sigma_x t_x \sqrt{2\pi n}}\sqrt{\frac{t_x^2+\pi^2}{t_x^2 }}, \\
    B(x) & \leq & \left(1+\frac1\pi+\frac{ 2\sqrt{t_x^2+\pi^2} }{\pi^2 -4} \right) \frac{\pi^2 \sqrt{t_x^2+4}}{ \sigma_x t_x \sqrt{2\pi n}}.
\label{CIB}
\end{eqnarray}
Finally, we find from \eqref{PRCI1} together with \eqref{CIA} and \eqref{CIB} that
\begin{equation*}
    \dP\Big(\frac{D_n}{n} \geq x \Big) \leq  P(x)\frac{\exp(-n I(x)-\{nx\}t_x)}{\sigma_x t_x \sqrt{2\pi n}}
\end{equation*}
where 
\begin{align*}
    P(x) =  \sqrt{\frac{t_x^2 + \pi^2}{t_x^2}}+ \left(1+\frac1\pi+\frac{ 2\sqrt{t_x^2+\pi^2} }{\pi^2 -4} \right) \sqrt{\frac{\pi^2(t_x^2+4)}{4}}   ,
\end{align*}
which is exactly what we wanted to prove.


\section{Proof of the standard results}
\label{S-SR}

We now focus our attention on the more standard results concerning the sequence $(D_n)$ such as the quadratic strong law, 
the law of iterated logarithm and the functional central limit theorem.

\begin{proof}[Proof of Proposition \ref{P-QSLLIL}]
First of all, one can observe from \eqref{DECDN} and \eqref{DEFMN} that the martingale $(M_n)$ can be rewritten in the additive form
\begin{equation}
\label{NEWDEFMN}
M_n=\sum_{k=1}^{n-1} (k+1) (\xi_{k+1}-p_k).
\end{equation}
It follows from the almost sure convergence \eqref{ASCVGDN} together with \eqref{PQVMN} and the classical Toeplitz lemma that
the predictable quadratic variation $\langle M \rangle_n$ of $(M_n)$ satisfies
\begin{equation}
\label{CVGPQVMN}
\lim_{n \rightarrow \infty}\frac{\langle M \rangle_n}{n^3}=\frac{1}{12} \hspace{1cm} \text{a.s.}
\end{equation}
Denote by $f_n$ the explosion coefficient associated with $(M_n)$,
\begin{equation}
f_n=\frac{\langle M \rangle_{n+1}-\langle M \rangle_n}{\langle M \rangle_{n+1}}=\frac{(n-D_n)(D_n+1)}{\langle M \rangle_{n+1}}. \nonumber
\end{equation}
We obtain from \eqref{ASCVGDN} and \eqref{CVGPQVMN} that
\begin{equation}
\label{CVGFN}
\lim_{n \rightarrow \infty}nf_n=3 \hspace{1cm} \text{a.s.}
\end{equation}
which implies that $f_n$ converges to zero almost surely as $n$ goes to infinity. 
In addition, we clearly have for all $n\geq 1$, $| \xi_{n+1} -p_n | \leq 1$. Consequently, we deduce from the quadratic strong law for martingales given e.g. by Theorem 3 in \cite{Bercu2004} that
\begin{equation*}
\lim_{n \rightarrow \infty} \frac{1}{\log \langle M \rangle_n}\sum_{k=1}^n f_k \frac{M_k^2}{\langle M \rangle_k}=1 \hspace{1cm}\text{a.s.}
\end{equation*}
which ensures that
\begin{equation}
\label{MGQSL1}
\lim_{n \rightarrow \infty} \frac{1}{\log n}\sum_{k=1}^n  \frac{M_k^2}{k^4}=\frac{1}{12} \hspace{1cm}\text{a.s.}
\end{equation}
However, it follows from \eqref{DEFMN} that
\begin{equation}
\label{MGQSL2}
\frac{M_n^2}{n^4}=\Big(\frac{D_n}{n}-\frac{1}{2}\Big)^2+\frac{1}{n}\Big(\frac{D_n}{n}-\frac{1}{2}\Big)+\frac{1}{4n^2}
\end{equation}
Therefore, we obtain once again from \eqref{ASCVGDN} together with \eqref{MGQSL1} and \eqref{MGQSL2} that
\begin{equation*}
\lim_{n \rightarrow \infty} \frac{1}{\log n}\sum_{k=1}^n  \Big(\frac{D_k}{k}-\frac{1}{2}\Big)^2=\frac{1}{12} \hspace{1cm}\text{a.s.}
\end{equation*}
which is exactly the quadratic strong law \eqref{QSL}. It only remains to prove the law of iterated logarithm given by \eqref{LIL}.
It immediately follows from the law of iterated logarithm for martingales given e.g. by Corollary 6.4.25 in \cite{Duflo1997} that 
\begin{eqnarray*}
\limsup_{n \rightarrow \infty}  \Bigl(\frac{1}{2  \langle M \rangle_n \log \log \langle M \rangle_n}\Bigr)^{1/2}M_n 
&=& -\liminf_{n \rightarrow \infty}  \Bigl(\frac{1}{2  \langle M \rangle_n \log \log \langle M \rangle_n }\Bigr)^{1/2} M_n 
\notag\\
&=& 1 \hspace{1cm}\text{a.s.}
\end{eqnarray*}
which leads via \eqref{CVGPQVMN} to
\begin{eqnarray}
\limsup_{n \rightarrow \infty}  \Bigl(\frac{1}{2  n^3 \log \log  n}\Bigr)^{1/2}M_n 
&=& -\liminf_{n \rightarrow \infty}  \Bigl(\frac{1}{2  n^3 \log \log  n }\Bigr)^{1/2} M_n 
\notag\\
&=& \frac{1}{\sqrt{12}} \hspace{1cm}\text{a.s.}
\label{LILMG}
\end{eqnarray}
Finally, 
we find from \eqref{DEFMN} and \eqref{LILMG} that
\begin{eqnarray*}
\limsup_{n \rightarrow \infty}  \Bigl(\frac{n}{2 \log \log n}\Bigr)^{1/2} \Bigl(\frac{D_n}{n}-\frac{1}{2}\Bigr) 
&=& -\liminf_{n \rightarrow \infty}  \Bigl(\frac{n}{2 \log \log n}\Bigr)^{1/2} \Bigl(\frac{D_n}{n}-\frac{1}{2}\Bigr) 
\notag\\
&=& \frac{1}{\sqrt{12}} \hspace{1cm}\text{a.s.}
\end{eqnarray*}
which achieves the proof of Proposition \ref{P-QSLLIL}.
\end{proof}

\begin{proof}[Proof of Proposition \ref{P-FCLT}]
We shall now proceed to the proof of the functional central limit theorem 
given by the distributional convergence \eqref{FCLT}. On the one hand, it follows from \eqref{CVGPQVMN} that
for all $t \geq 0$, 
\begin{equation}
\lim_{n\rightarrow \infty} \frac{1}{n^3}\langle M \rangle_{\lfloor nt \rfloor} =\frac{t^{3}}{12} \hspace{1cm}\text{a.s.}
\label{CVGPQVT}
\end{equation}
On the other hand, it is quite straightforward to check that $(M_n)$ satisfies Lindeberg's condition given, for all $t \geq 0$ and for any $\varepsilon>0$, by
\begin{equation}
\label{LINDEBERGT}
\frac{1}{n^{3}}\sum_{k=2}^{\lfloor nt \rfloor}\dE\big[\Delta M_k^2 \rI_{\{|\Delta M_k|>\varepsilon \sqrt{n^{3}} \}}|\cF_{k-1}\big] \limp 0
\end{equation}
where $\Delta M_n=M_n-M_{n-1}=n(\xi_n - p_{n-1})$. As a matter of fact, we have for all $t \geq 0$ and for any $\varepsilon>0$,
\begin{equation*}
 \frac{1}{n^{3}}\sum_{k=2}^{\lfloor nt \rfloor}\dE\big[\Delta M_k^2 \rI_{\{|\Delta M_k|>\varepsilon \sqrt{n^{3}} \}}|\cF_{k-1}\big] 
\leq 
\frac{1}{n^{6} \varepsilon^2} \sum_{k=2}^{\lfloor nt \rfloor} \dE\big[\Delta M_k^4|\cF_{k-1}\big].
\end{equation*}
However, we already saw that for all
$n \geq 2$, $|\Delta M_n|\leq n$. Consequently, we obtain that for all $t \geq 0$ and for any $\varepsilon>0$,
\begin{equation*}
 \frac{1}{n^{3}}\sum_{k=2}^{\lfloor nt \rfloor}\dE\big[\Delta M_k^2 \rI_{\{|\Delta M_k|>\varepsilon \sqrt{n^{3}} \}}|\cF_{k-1}\big] 
\leq 
\frac{1}{n^{6} \varepsilon^2} \sum_{k=2}^{\lfloor nt \rfloor} k^4 \leq
\frac{t^5}{n \varepsilon^2}
\end{equation*}
which immediately implies \eqref{LINDEBERGT}.
Therefore, we deduce from \eqref{CVGPQVT} and \eqref{LINDEBERGT} together with the functional central limit theorem  for martingales given e.g. in Theorem 2.5 of \cite{Durrett1978} that
\begin{equation}
\label{FCLTMART}
\Big(\frac{M_{\lfloor nt \rfloor}}{\sqrt{n^{3}}}, t \geq 0\Big) \Longrightarrow \big( B_t, t \geq 0 \big)
\end{equation}
where $\big( B_t, t \geq 0 \big)$ is a real-valued centered Gaussian process starting at the origin with covariance given, for all $0<s \leq t$, by
$\dE[B_s B_t]= s^{3}/12$.
Finally, \eqref{DEFMN} and \eqref{FCLTMART} lead to
\eqref{FCLT} where $W_t=B_t/t^2$,
which is exactly what we wanted to prove.
\end{proof}

\section*{Acknowledgment}
The authors would like to thank the two anonymous reviewers and the associate editor for their careful reading and helpful comments
which help to improve the paper substantially.

\bibliographystyle{abbrv}
\bibliography{Biblio-LDPDES}

\end{document}